\titleformat{\chapter}[display] {\normalfont\Large\filcenter\normalfont}
{\titlerule[1pt]\vspace{1pt}%
\titlerule\vspace{1pc}%
\LARGE\MakeUppercase{\chaptertitlename} \thechapter} {1pc}{
\titlerule\vspace*{1pc}\Huge}
\titlespacing{\chapter}{0pt}{-3em}{3em}
\theoremstyle{break}
\newtheorem{thm}{Theorem}
\newtheorem{lem}{Lemma}
\newtheorem{defn}{Definition}
\newtheorem{rem}{Remark}
\newtheorem{prop}{Proposition}
\newcommand{\dx}{\,\text{d}}
\DeclareMathOperator*{\esssup}{ess\,sup}
\DeclareMathOperator*{\essinf}{ess\,inf}
\newcommand{\qa}{\mathbf{a}}
\newcommand{\qb}{\mathbf{b}}
\newcommand{\qe}{\mathbf{e}}
\newcommand{\qk}{\mathbf{k}}
\newcommand{\qm}{\mathbf{m}}
\newcommand{\qn}{\mathbf{n}}
\newcommand{\qp}{\mathbf{p}}
\newcommand{\qr}{\mathbf{r}}
\newcommand{\qs}{\mathbf{s}}
\newcommand{\qt}{\mathbf{t}}
\newcommand{\qu}{\mathbf{u}}
\newcommand{\qx}{\mathbf{x}}
\newcommand{\qy}{\mathbf{y}}
\newcommand{\qE}{\mathbf{E}} 
\newcommand{\FT}{\mathcal{F}} 
\newcommand{\ZT}{\mathcal{Z}} 
\newcommand{\QT}{\mathcal{Q}} 
\newcommand{\Gq}{\mathcal{G}_q} 
\newcommand{\QH}{\mathbb{H}}
\newcommand{\Sc}{\text{Sc}}
\newcommand{\VEC}{\text{Vec}}
\begin{document}
\begin{center}
\textbf{\Large Some results on the lattice parameters of quaternionic Gabor frames}
\end{center}
\begin{center}
S. Hartmann
\end{center}

\vspace{2em}
\begin{abstract}
Gabor frames play a vital role not only modern harmonic analysis but also in several fields of applied mathematics, for instances, detection of chirps, or image processing. In this work we present a non-trivial generalization of Gabor frames to the quaternionic case and give new density results. The key tool is the two-sided windowed quaternionic Fourier transform (WQFT). As in the complex case, we want to write the WQFT as an inner product between a quaternion-valued signal and shifts and modulates of a real-valued window function. We demonstrate a Heisenberg uncertainty principle and for the results regarding the density, we employ the quaternionic Zak transform to obtain necessary and sufficient conditions to ensure that a quaternionic Gabor system is a quaternionic Gabor frame. We conclude with a proof that the Gabor conjecture do not hold true in the quaternionic case.
\end{abstract}

\section{Introduction}
As a generalization of the real and complex Fourier transform (FT), the quaternionic Fourier transform (QFT) with two exponential kernels using two non-commutative complex variables, receives an increasing attention in the representation of signals \cite{Bahri-Quat_Gabor,Hitzer-Uncertainty,BAS,BULOWPHD,TAELL,Ell2014,BalianLowforWQFT,Hitzer-QFT}. There is some degree of freedom as where one can applies the exponential kernels, see for example \cite{Bahri-Quat_Gabor, TAELL}. The right-sided version has been thoroughly studied by M. Bahri et a.l. in \cite{Bahri-Quat_Gabor, Bahrietal, Hitzer-Uncertainty}, among others. In this paper we are going to use the two-sided (sandwiched) version, which leads to more symmetric properties than the one-sided version \cite{Bahri-Quat_Gabor, Hitzer-Uncertainty, BalianLowforWQFT, Hitzer-QFT}. Nevertheless, as in the case of the FT, the QFT cannot capture the features of instationnary signals. To overcome this problem  
 we multiply the signal with translations and modulations of a real-valued window function and then apply the QFT. This procedure yields a joint representation of locality and frequency. Hence, the window function is also of importance in the synthesis of the signal. In applications it is required to reconstruct the signal from a discrete set of data in a given lattice. The properties of this lattice are crucial for the reconstruction. In the non-quaternionic case three different situations can occur, which depend on the lattice parameters $\alpha$ and $\beta$. If the product of these two parameters is greater than $1$, a reconstruction is not possible, whereas it is possible for certain functions, e.g. the Gaussian function, if $\alpha \beta < 1$. The third case, $\alpha \beta = 1$, also known as \emph{critical density}, is interesting since it allows for the existence of 
 orthogonal basis. This leads to the study of the synthesis of the signal under certain circumstances.  This situation is still under investigation, see \cite{Borichev, HEIL, Janssentight,  ClaasroomProof}.\\
In the quaternionic case, the influence of the density of the lattice in the reconstruction of the signal is not yet fully studied. Based on that, we focus our attention on the critical density case $\alpha \beta = 1$. Since the Gaussian function also minimizes the quaternionic version of the Heisenberg uncertainty principle, we are especially interested in the Gaussian window at the critical density. Our final result is a partial generalization of the Lyubarskii and Seip-Wallstens Theorem \cite{Seip1, Seip2}.\\
The paper is organized as follows: In Section 2 we briefly sketch some results on quaternionic algebra and Wiener-Amalgam spaces. In Section 3 we summarize (without proofs) the relevant material on WQFT and develop the theory of quaternionic Gabor frames. As the Zak transform will be our main tool for obtaining density results, we introduce in this section the quaternionic Zak transform and establish its main properties (unitarity, periodicity, inversion formula, etc.).  In Section 4 we provide the connections between the Zak transform of the window function and the frame bounds (related to the parameters $\alpha$ and $\beta$), together with the conditions under which the quaternionic Gabor system constitutes a quaternionic Gabor frame. 

\section{Preliminaries}
\subsection{Quaternions}
The \emph{quaternion algebra} $\QH$ is an extension of the complex numbers into four dimensions and is given by
\[ \QH = \{ q \,|\, q = q_0 + q_1 i + q_2 j + q_3 k, \text{\  with\  } q_0, q_1, q_2,q_3 \in \mathbb{R}\}, \]
where the elements $i,j,k$ satisfy
\[ ij = -ji = k \qquad i^2 = j^2 = k^2 = ijk = -1. \]

We can write a quaternion $q$ also as a sum of a \emph{scalar} $\Sc[q] = q_0 \in \mathbb{R}$ and a three-dimensional vector $\VEC[q] = q_1 i + q_2 j + q_3 k$, $\VEC[q]$ is often called \emph{pure quaternion},
\[ q  = \Sc[q] + \VEC[q]. \]
The conjugate is an automorphism on $\mathbb{H}$, given by $q \mapsto \overline{q} = \Sc[q] - \VEC[q]$. Moreover
\[ q \overline{q} = |q|^2. \]
Although quaternions are noncommutative, in general, a \emph{cyclic multiplication} property exists. This feature is going to be a useful tool for the rest of this work.
\begin{lem}[Cyclic multiplication \cite{Sprssig}]
For all $q,r,s \in \QH$ holds
\begin{equation}\label{eq:SC} \Sc[q r s] = \Sc[r s q] = \Sc[s q r]. \end{equation}
\end{lem}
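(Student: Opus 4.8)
The plan is to reduce this three-factor identity to the classical two-factor symmetry $\Sc[ab] = \Sc[ba]$ and then invoke associativity of quaternion multiplication. First I would record the elementary facts that $\Sc[q] = \tfrac12(q + \overline q)$, that every quaternion splits as $q = q_0 + \VEC[q]$ with $\VEC[q]$ identified with a vector in $\mathbb{R}^3$, and that the product of two pure quaternions obeys $\VEC[a]\,\VEC[b] = -\langle \VEC[a], \VEC[b]\rangle + \VEC[a]\times\VEC[b]$, where $\langle\cdot,\cdot\rangle$ and $\times$ are the Euclidean inner and cross products on $\mathbb{R}^3$. In particular the scalar part of such a product is $-\langle \VEC[a], \VEC[b]\rangle$.

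Next I would establish $\Sc[ab] = \Sc[ba]$ for arbitrary $a,b \in \QH$. Expanding $ab = a_0 b_0 + a_0\VEC[b] + b_0\VEC[a] + \VEC[a]\,\VEC[b]$ and taking scalar parts, only the terms $a_0 b_0$ and $\Sc[\VEC[a]\VEC[b]] = -\langle\VEC[a],\VEC[b]\rangle$ survive, so $\Sc[ab] = a_0 b_0 - \langle\VEC[a],\VEC[b]\rangle$, which is manifestly symmetric under $a\leftrightarrow b$. (Alternatively one could argue via $\Sc[ab] = \Sc[\overline{ab}] = \Sc[\,\overline b\,\overline a\,]$ using that conjugation is an anti-automorphism fixing scalar parts, but the direct computation is the most transparent.)

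Finally, using associativity of the quaternion product, I would apply the two-factor identity twice with different bracketings: $\Sc[qrs] = \Sc[(qr)s] = \Sc[s(qr)] = \Sc[sqr]$ and $\Sc[qrs] = \Sc[q(rs)] = \Sc[(rs)q] = \Sc[rsq]$, and chaining these yields \eqref{eq:SC}. I do not expect a genuine obstacle here: the one point requiring care is the observation that the cross-product term carries no scalar contribution, which is precisely what makes the two-factor swap work; granting that, the cyclic property for three factors is immediate.
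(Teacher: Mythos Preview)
Your argument is correct and entirely standard: the two-factor identity $\Sc[ab]=\Sc[ba]$ follows from the explicit formula $\Sc[ab]=a_0b_0-\langle\VEC[a],\VEC[b]\rangle$, and associativity then gives the cyclic version for three factors. The paper itself supplies no proof of this lemma; it is simply quoted with a reference, so there is no in-paper argument to compare against.
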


\subsection{Related function spaces}
In the following we use the space $L^2(\mathbb{R}^2,\mathbb{H})$, an immediate generalization of the Hilbert space of all square-integrable functions, which consists of all quaternion-valued functions $f: \mathbb{R}^2 \to \mathbb{H}$ with finite norm
\[ \|f\|_2 = \left(\int_{\mathbb{R}^2} |f(\qx)|^2 \dx^2 \qx \right)^{1/2} < \infty \]
where $\dx^2 \qx = \dx x_1 \dx x_2$ represents the usual Lebesgue measure in $\mathbb{R}^2$. The $L^2$-norm is induced by the symmetric real scalar product
\begin{equation}\label{eq:Sc}
\langle f, g \rangle =  \Sc \int_{\mathbb{R}^2} f(\qx) \overline{g(\qx)} \dx^2 \qx = \frac{1}{2}\int_{\mathbb{R}^2} (g(\qx) \overline{f(\qx)} + f(\qx) \overline{g(\qx)})\dx^2 \qx \end{equation}
which makes the space a real linear space. 
\begin{rem}[\cite{Hitzer-QFT}]
It is also possible to define a quaternion-valued inner product on $L^2(\mathbb{R}^2,\mathbb{H})$ with
\[ (f,g) = \int_{\mathbb{R}^2} f(\qx) \overline{g(\qx)} \dx^2 \qx \]
in which case one obtains a \emph{left Hilbert module}.
We remark that for a quaternion-valued inner product one has the following scalar-product rules
\[ (\lambda f, g) = \lambda (f,g), \qquad  (f, g \lambda) = (f, g) \overline{\lambda}, \qquad \lambda \in \mathbb{H} . \]
Both inner products lead to the same norm.  
\end{rem}

The two-dimensional cube $[0,\alpha]^2$ will be denoted by $Q_{\alpha}$ and the special case of $\alpha = 1$ simply by $Q$.
The Wiener-Amalgam space is a suitable space from where to choose the window functions.
\begin{defn}[Wiener-Amalgam space \cite{Grchenig2001}]
A function $g \in L^\infty(\mathbb{R}^2, \mathbb{R})$ belongs to the Wiener-Amalgam space $W^\alpha = W^\alpha(\mathbb{R}^2)$ if
\begin{equation} \|g\|_{W^\alpha} = 
 \sum_{\qn \in \mathbb{Z}^2} \esssup_{\qx \in \mathbb{R}^2} |g(\qx) \mathcal{X}_{Q_\alpha}(\qx - \alpha \qn)|  \label{defn:wiener}
 \end{equation}
is finite.
\end{defn}
The subspace of the continuous functions is denoted by $W_0^\alpha = W^\alpha \cap C$.\\ 

\section{Quaternionic Gabor frames $\Gq(g, \alpha, \beta)$}
In this section we extend the complex Fourier transform to the two-sided quaternionic setting \cite{BULOWPHD, TAELL, BalianLowforWQFT}. We show some properties of the two-sided quaternionic Fourier transform. With the same motivation as in the short-time real-valued Fourier transform we have to multiply the signal with a window function $g$ to obtain local information of the frequency. This idea leads to quaternionic Gabor frames and related operators, which will be discussed in the second part. The definition of the Zak transform in the third part arises naturally and will be our main tool for the density results.  
\subsection{The quaternionic Gabor transfom}
\begin{defn}[Two sided quaternionic Fourier transform (QFT) \cite{BalianLowforWQFT}]
The \emph{two sided quaternionic Fourier transform} of $f \in L^2(\mathbb{R}^2,\mathbb{H})$ is the function $\FT_q(f)\,:\, \mathbb{R}^2 \to \mathbb{H}$ defined by
\[ \omega = (\omega_1, \omega_2) \mapsto \FT_q(f)(\omega) = \widehat{f}(\omega) = \int_{\mathbb{R}^2} \exp(-2 \pi i x_1 \omega_1) f(\qx) \exp(-2 \pi j x_2 \omega_2) \dx^2 \qx. \]
\end{defn}

\begin{rem}[\cite{BalianLowforWQFT}]
For the reconstruction of $f$ we obtain
\begin{equation}\label{eq:REC} \qx = (x_1, x_1) \mapsto f(\qx) = \FT_q^{-1}(\FT_q f)(\qx) = \int_{\mathbb{R}^2} \exp(2 \pi i x_1 \omega_1) \widehat{f}(\omega) \exp(2 \pi j x_2 \omega_2) \dx^2 \omega. \end{equation}
\end{rem}

There is also a Heisenberg uncertainty principle in the QFT case. Moreover, the two-dimensional Gaussian function minimizes the uncertainty. 
\begin{defn}[\cite{Hitzer-Uncertainty}]
Let $f \in L^2(\mathbb{R}^2, \mathbb{H})$ be such that $x_k f \in L^2(\mathbb{R}^2,\mathbb{H})$ and let $\widehat{f} \in L^2(\mathbb{R}^2, \mathbb{H})$ be its QFT such that $\omega_k \widehat{f} \in L^2(\mathbb{R}^2,\mathbb{H})$. The \emph{spatial uncertainty} $\Delta x_k$ is defined as
\[ \Delta x_k = \sqrt{\frac{1}{\|f\|^2_2} \int_{\mathbb{R}^2} |f(\qx)|^2 x_k^2 \dx^2 \qx} \]
and the \emph{spectral uncertainty} $\Delta \omega_k$ is defined as
\[ \Delta \omega_k = \sqrt{\frac{1}{\|\widehat{f}\|^2_2} \int_{\mathbb{R}^2} |\widehat{f}(\omega)|^2 \omega_k^2 \dx^2 \omega}, \quad \text{for\ } k = 1,2 \]
\end{defn}

\begin{thm}[Heisenberg uncertainty principle in QFT case]
Let $f \in L^2(\mathbb{R}^2,\mathbb{H})$ satisfy to $(1 + |x_k|)f(\qx) \in L^2(\mathbb{R}^2,\mathbb{H})$ and $\frac{\partial}{\partial x_k} f(\qx) \in L^2(\mathbb{R}^2,\mathbb{H})$. Then we have 
\[ \Delta x_k \Delta \omega_k \geq \frac{1}{4 \pi} \quad \text{for\ } k = 1,2. \]
Also, equality holds if and only if $f(\qx)$ is a Gaussian function.
\end{thm}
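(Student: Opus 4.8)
The plan is to reduce the quaternionic statement to the classical Heisenberg inequality by exploiting the product structure of the two-sided QFT kernel. First I would observe that, because the left kernel $\exp(-2\pi i x_1\omega_1)$ involves only $i$ and the right kernel $\exp(-2\pi j x_2\omega_2)$ involves only $j$, the QFT factors as an iterated transform: in the variable $x_1$ it is an ordinary complex Fourier transform in the imaginary unit $i$, and in $x_2$ an ordinary complex Fourier transform in the imaginary unit $j$. So for fixed $k$ I would freeze the other variable and view $\widehat f$ as obtained from $f$ by a one-dimensional complex FT in $x_k$ (with left-multiplication if $k=1$, right-multiplication if $k=2$), together with a unitary transform in the remaining variable that does not affect the relevant $L^2$-norms.

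Next I would set up the quantities at play. Using the Parseval/Plancherel identity for the two-sided QFT (which follows from \eqref{eq:REC} and the definition \eqref{eq:Sc} of the real scalar product), one has $\|\widehat f\|_2 = \|f\|_2$, so the normalizations in $\Delta x_k$ and $\Delta\omega_k$ match up. The key differentiation identity is that the QFT intertwines multiplication by $x_k$ with differentiation in $\omega_k$: concretely, $\FT_q\!\left(\frac{\partial}{\partial x_1}f\right)(\omega) = 2\pi i\,\omega_1\,\widehat f(\omega)$ and $\FT_q\!\left(\frac{\partial}{\partial x_2}f\right)(\omega) = \widehat f(\omega)\,2\pi j\,\omega_2$; the hypotheses $(1+|x_k|)f\in L^2$ and $\partial_{x_k}f\in L^2$ are exactly what is needed to justify these identities and the integration by parts below. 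Then I would run the classical argument: write
\[
\|f\|_2^2 = \int_{\mathbb{R}^2} |f(\qx)|^2 \dx^2\qx = -\int_{\mathbb{R}^2} x_k\,\frac{\partial}{\partial x_k}\,|f(\qx)|^2 \dx^2\qx
\]
by integration by parts (the boundary terms vanish by the decay hypothesis), expand $\frac{\partial}{\partial x_k}|f|^2 = \frac{\partial}{\partial x_k}(f\overline f)$ in terms of $\partial_{x_k}f$, take scalars and use cyclicity \eqref{eq:SC} to rewrite the resulting expression as (a constant times) $\langle x_k f,\ \partial_{x_k} f\rangle$, and finally apply Cauchy–Schwarz for the real scalar product \eqref{eq:Sc}:
\[
\|f\|_2^2 \le 2\,\|x_k f\|_2\,\Big\|\tfrac{\partial}{\partial x_k}f\Big\|_2 .
\]
Combining this with Plancherel and the differentiation identity, $\big\|\partial_{x_k}f\big\|_2 = 2\pi\,\|\omega_k\widehat f\|_2$, yields $\|f\|_2^2 \le 4\pi\,\|x_k f\|_2\,\|\omega_k\widehat f\|_2$, which is precisely $\Delta x_k\,\Delta\omega_k \ge \frac{1}{4\pi}$ after dividing by $\|f\|_2^2 = \|f\|_2\|\widehat f\|_2$.

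For the equality case, I would trace back the two inequalities used. Equality in Cauchy–Schwarz forces $\frac{\partial}{\partial x_k}f$ to be a (real) scalar multiple of $x_k f$, i.e. $\partial_{x_k}f = -c_k\,x_k f$ with $c_k>0$ (positivity coming from the integrability requirement), and a similar condition must hold simultaneously for the other value of $k$ after repeating the argument; solving this pair of first-order ODEs in $x_1,x_2$ gives $f(\qx) = f(0)\exp(-c_1 x_1^2/2 - c_2 x_2^2/2)$, a Gaussian. Conversely one checks directly that such a Gaussian saturates the bound. The main subtlety — and the step I would be most careful about — is the noncommutativity: because $j$ multiplies $\widehat f$ on the right while $i$ multiplies on the left, one must be scrupulous about keeping the scalar part and using the cyclicity lemma \eqref{eq:SC} at every place where the classical proof silently commutes factors, and one must make sure the integration-by-parts manipulation of $\frac{\partial}{\partial x_k}(f\overline f)$ is done with the correct (noncommutative) product rule before taking $\Sc[\cdot]$. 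Everything else is a routine transcription of the Euclidean proof.
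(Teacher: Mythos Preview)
The paper does not actually prove this theorem: immediately after the statement it writes ``Since the proof is similar to \cite{Hitzer-Uncertainty}, we omit it.'' Your proposal is correct and is precisely the classical route---integration by parts on $\int|f|^2$ to obtain $\|f\|_2^2\le 2\,\|x_k f\|_2\,\|\partial_{x_k}f\|_2$, then Plancherel for the two-sided QFT together with the differentiation laws $\FT_q(\partial_{x_1}f)=2\pi i\,\omega_1\,\widehat f$ and $\FT_q(\partial_{x_2}f)=\widehat f\,2\pi j\,\omega_2$---handled with the proper left/right bookkeeping and the cyclicity \eqref{eq:SC}. This is almost certainly the argument in the cited reference, so there is nothing further to compare; the one small remark is that equality for a \emph{single} $k$ only forces $f$ to be Gaussian in that variable, and one needs equality for both $k=1,2$ simultaneously (as you note) to conclude that $f$ is a full two-dimensional Gaussian.
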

Since the proof is similar to \cite{Hitzer-Uncertainty}, we omit it.

\begin{defn}[Windowed quaternionic Fourier transform \cite{BalianLowforWQFT}]
The windowed quaternionic Fourier transform (WQFT) of $f \in L^2(\mathbb{R}^2, \mathbb{H})$ with respect to a non-zero window function $g \in L^2(\mathbb{R}^2, \mathbb{R})$ is defined as
\[ \QT_g f(\qb, \omega) = \int_{\mathbb{R}^2} \exp(-2 \pi i x_1 \omega_1) f(\qx) g(\qx-\qb) \exp(-2 \pi j x_2 \omega_2) \dx^2 \qx, \qquad (\qb, \omega) \in \mathbb{R}^2 \times \mathbb{R}^2.
\]
\end{defn}

In order to write $\QT_g f(\qb, \omega)$ as an inner product of $f$ with translates by $\qb$ and modulations by $\omega$ of $g$, we use the concept of carriers, introduced by Shapiro et al. \cite{Shapiro}.

\begin{defn}[Carrier \cite{Shapiro}]
For two quaternions $p,q \in \mathbb{H}$ we define the \emph{right} $C_r$ and \emph{left} $C_l$ \emph{carrier operators} as
\[ C_r(p)q = q p \qquad \text{and} \qquad q C_l(p) = p q. \] 
\end{defn}

\begin{lem}[Properties of the carrier]
The carriers have the following properties with $p \in \mathbb{H}$\\
(a)\ $\overline{C_r(p)} = C_l(\overline{p})$ \  and \  $\overline{C_l(p)} = C_r(\overline{p})$,\\
(b)\ $C_r(p)1 =  1C_l(p) = p$.
\end{lem}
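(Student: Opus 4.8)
The plan is to derive both statements directly from the defining relations $C_r(p)q = qp$ and $qC_l(p) = pq$, using only that quaternionic conjugation is an involutive anti-automorphism of $\mathbb{H}$, i.e.\ $\overline{\overline{q}} = q$ and $\overline{qr} = \overline{r}\,\overline{q}$ for all $q,r \in \mathbb{H}$.

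Property (b) requires nothing more than substituting $q = 1$ into the two definitions: $C_r(p)1 = 1\cdot p = p$ and $1\cdot C_l(p) = p\cdot 1 = p$.

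For (a) I would first fix the convention that the conjugate of a carrier is defined by conjugating its action on the conjugated argument, so that $\overline{C_r(p)}$ denotes the map $q \mapsto \overline{C_r(p)\overline{q}}$. Expanding gives $\overline{C_r(p)\overline{q}} = \overline{\overline{q}\,p} = \overline{p}\,\overline{\overline{q}} = \overline{p}\,q$, which is precisely the linear map $q \mapsto \overline{p}\,q$; on the other hand $C_l(\overline p)$ is characterised by $q\,C_l(\overline p) = \overline p\,q$, i.e.\ it is the same map. Hence $\overline{C_r(p)} = C_l(\overline p)$. The companion identity $\overline{C_l(p)} = C_r(\overline p)$ follows symmetrically by exchanging the roles of left and right multiplication, or by applying the first identity with $\overline p$ in place of $p$ and conjugating, using $\overline{\overline{C_r(p)}} = C_r(p)$.

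The computation is short; the only point that needs care — bookkeeping rather than a genuine obstacle — is being explicit about which side each carrier acts on and about the convention for conjugating an $\mathbb{H}$-linear operator, so that the two sides of each identity in (a) literally denote the same map $\mathbb{H}\to\mathbb{H}$. Once that is settled, everything collapses to the single anti-automorphism identity $\overline{\overline{q}\,p} = \overline{p}\,q$.
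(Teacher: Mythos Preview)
Your proof is correct. The paper states this lemma without proof, so there is nothing to compare against; your direct verification from the defining relations together with the anti-automorphism identity $\overline{qr}=\overline{r}\,\overline{q}$ is exactly the intended argument, and your remark about fixing the conjugation convention for carriers is the only point that needs care.
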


Now, we are equipped with the necessary tools for the generalization of translations and modulations to our setting.

\begin{defn}
For $\qb, \omega \in \mathbb{R}^2$ we define the following operators
\[ T_{\qb} g(\qx) = g(\qx - \qb) \qquad \text{translation by $\qb \in \mathbb{R}^2$} \] and
\[ M_\omega g(\qx)  = \exp(2 \pi j \omega_2 x_2) g(\qx)C_r(\exp(2 \pi i \omega_1 x_1)) \qquad \text{modulation by $\omega \in \mathbb{R}^2$}. \]
\end{defn}
With the help of those two operators we rewrite the WQFT as
\[ \QT_g f(\qb, \omega) = (f, M_\omega T_\qb g). \]

Since the translation and modulation only act on the window function $g$, it is quite interesting to point out properties between translation and modulation.  
\begin{rem}\label{rem:COMMURELA}
Those two operators satisfy the following \emph{commutation relation}
\begin{equation}\label{eq:commuq}
T_\qb M_\omega g(\qx) = \exp(2 \pi j b_2 \omega_2) M_\omega T_\qb g(\qx) \exp(2 \pi i b_1 \omega_1). 
\end{equation}
As a result, translation and modulation commute if and only if $b_1 \omega_1$ and $b_2 \omega_2 \in \mathbb{Z}$. We also obtain the following relation which displays the interplay between the two operators and the QFT:
\[
\widehat{T_\qb g}(\omega) = \overline{M_{\qb}} \widehat{g}(\omega) \qquad \text{and} \qquad T_\omega \widehat{g}(\xi) = \widehat{\overline{M_{-\omega}}g}(\xi).
\]

Also, based on these operators we can express the norm (\ref{defn:wiener}) of the Wiener-Amalgam space as 
\[ \|g\|_{W^\alpha} = \sum_{\qn \in \mathbb{Z}^2} \|g T_{\alpha \qn} \mathcal{X}_{Q_\alpha}\|_{\infty}. \]
\end{rem}

We now look into some properties of the WQFT.
\begin{thm}
Let $g \in L^2(\mathbb{R}^2,\mathbb{R})$ be a non-zero window function and $f\in L^2(\mathbb{R}^2, \mathbb{H})$. Then, we have
\[ \QT_g(T_{\qx_0}f)(\qb,\omega) = \exp(-2 \pi i x_{01}\omega_1) \QT_g f( \qb - \qx_0,\omega) \exp(- 2 \pi j x_{02} \omega_2) \]
and 
\[ \QT_g(\overline{M_{-\omega_0}} f)(\qb,\omega) = \QT_g f(\qb, \omega - \omega_0). \]
\end{thm}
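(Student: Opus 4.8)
The plan is to verify both covariance identities by a direct computation from the definition of $\QT_g$; the only subtlety is the noncommutativity of $\QH$, which we circumvent by noting that the kernel factors that must be combined always live in a common commuting subplane ($\mathbb{R}\oplus\mathbb{R}i$ or $\mathbb{R}\oplus\mathbb{R}j$), and by using that the window $g$ is real-valued.

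For the translation identity I would start from
\[
\QT_g(T_{\qx_0}f)(\qb,\omega)=\int_{\mathbb{R}^2}\exp(-2\pi i x_1\omega_1)\,f(\qx-\qx_0)\,g(\qx-\qb)\,\exp(-2\pi j x_2\omega_2)\dx^2\qx
\]
and substitute $\qy=\qx-\qx_0$. Because $\exp(-2\pi i(y_1+x_{01})\omega_1)=\exp(-2\pi i x_{01}\omega_1)\exp(-2\pi i y_1\omega_1)$ and $\exp(-2\pi j(y_2+x_{02})\omega_2)=\exp(-2\pi j y_2\omega_2)\exp(-2\pi j x_{02}\omega_2)$ (in each case both factors are powers of the same unit imaginary and hence commute), the factors depending only on $\qx_0$ are constant in $\qy$ and may be pulled out of the integral, from the left and from the right respectively; this is legitimate since left (resp.\ right) multiplication by a fixed quaternion is a bounded $\mathbb{R}$-linear map commuting with Lebesgue integration. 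What is left between them is exactly $\QT_g f(\qb-\qx_0,\omega)$, which gives the asserted formula.

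For the modulation identity I would first record, consistently with Remark~\ref{rem:COMMURELA}, that $\overline{M_{-\omega_0}}f(\qx)=\exp(2\pi i\omega_{01}x_1)\,f(\qx)\,\exp(2\pi j\omega_{02}x_2)$, and substitute this into the definition of $\QT_g$. To the left of $f$ the two $i$-exponentials merge into $\exp(-2\pi i x_1(\omega_1-\omega_{01}))$; to the right of $f$ one first uses that $g(\qx-\qb)\in\mathbb{R}$ commutes with $\exp(2\pi j\omega_{02}x_2)$ to slide $g$ to the left of it, and then merges the two $j$-exponentials into $\exp(-2\pi j x_2(\omega_2-\omega_{02}))$. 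The resulting integral is precisely $\QT_g f(\qb,\omega-\omega_0)$.

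I do not expect a real obstacle: the argument is bookkeeping of the two-sided exponential kernels. The point that deserves attention — and that explains why the identities hold in this clean form — is that one never has to move an $i$-exponential past a $j$-exponential or past a genuinely $\QH$-valued factor: in the translation step the factors to be combined share a plane, and in the modulation step the only quantity squeezed between the $j$-exponentials is the real window $g$. Thus the real-valuedness of $g$ together with the sandwiched form of the QFT are exactly what make the computation go through.
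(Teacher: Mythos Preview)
Your direct computation is correct; in particular, your reading of $\overline{M_{-\omega_0}}f(\qx)=\exp(2\pi i\omega_{01}x_1)\,f(\qx)\,\exp(2\pi j\omega_{02}x_2)$ is consistent with the paper's conventions (cf.\ the relation $\widehat{T_\qb g}=\overline{M_\qb}\widehat{g}$ in Remark~\ref{rem:COMMURELA}), and the two key algebraic moves you isolate---factoring commuting exponentials in the same imaginary plane, and commuting the real window $g$ past the $j$-exponential---are exactly what is needed. The paper itself does not give a proof of this theorem but defers to the reference \cite{BalianLowforWQFT}, so there is nothing further to compare against; your argument is the standard change-of-variables verification one would expect in that source.
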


\begin{prop}[Reconstruction formula]
Let $g \in L^2(\mathbb{R}^2, \mathbb{R})$ be a non-zero window function. Then, every $f \in L^2(\mathbb{R}^2,\mathbb{H})$ can be fully reconstructed by
\[
f(\qx) = \frac{1}{\|g\|^2_2}\int_{\mathbb{R}^2}\int_{\mathbb{R}^2} \exp(2 \pi i x_1 \omega_1) \QT_g f(\qb, \omega)g(\qx - \qb) \exp(2 \pi j x_2 \omega_2) \dx ^2 \omega \dx^2 \qb.
\]
\end{prop}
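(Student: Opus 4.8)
The whole argument reduces to the QFT inversion formula \eqref{eq:REC}, after one elementary observation: for a \emph{fixed} translation $\qb$, the WQFT in the frequency variable is just an ordinary QFT of a windowed copy of $f$. Concretely, set $h_{\qb}(\qx) := f(\qx)\,g(\qx-\qb)$; comparing the two definitions one reads off directly that
\[ \QT_g f(\qb,\omega) = \widehat{h_{\qb}}(\omega), \qquad \omega \in \mathbb{R}^2. \]
Here $h_{\qb} \in L^2(\mathbb{R}^2,\mathbb{H})$ — this is the step where I would use that the window is bounded (e.g.\ $g \in W^\alpha \subset L^\infty$), so that multiplication by $g(\cdot - \qb)$ maps $L^2$ into $L^2$; if $g$ is merely in $L^2$, then $h_\qb \in L^1$ and $\widehat{h_\qb}$ has to be read in the usual limiting sense.

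Next I would apply \eqref{eq:REC} to $h_{\qb}$, which gives, for (almost) every $\qx$,
\[ f(\qx)\,g(\qx-\qb) \;=\; h_{\qb}(\qx) \;=\; \int_{\mathbb{R}^2} \exp(2\pi i x_1 \omega_1)\,\QT_g f(\qb,\omega)\,\exp(2\pi j x_2 \omega_2)\dx^2\omega. \]
Since $g$ is real-valued, $g(\qx-\qb)$ is a real scalar and therefore commutes with $\exp(2\pi j x_2\omega_2)$; multiplying the identity above on the right by $g(\qx-\qb)$ and integrating over $\qb \in \mathbb{R}^2$ yields
\[ \int_{\mathbb{R}^2} f(\qx)\,g(\qx-\qb)^2\dx^2\qb \;=\; \int_{\mathbb{R}^2}\!\int_{\mathbb{R}^2} \exp(2\pi i x_1 \omega_1)\,\QT_g f(\qb,\omega)\,g(\qx-\qb)\,\exp(2\pi j x_2 \omega_2)\dx^2\omega\,\dx^2\qb. \]
On the left, $f(\qx)$ is constant in $\qb$, and translation invariance of the Lebesgue measure gives $\int_{\mathbb{R}^2} g(\qx-\qb)^2\dx^2\qb = \|g\|_2^2$; hence the left-hand side equals $\|g\|_2^2\,f(\qx)$, and dividing by $\|g\|_2^2$ produces exactly the asserted reconstruction formula. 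Note that no non-commutativity ever has to be resolved: the two inner exponentials of the WQFT are powers of $i$ and of $j$ respectively, so they are never commuted past one another, and the real window is freely moved around.

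The genuinely delicate point, and the one I expect to be the main obstacle, is the legitimacy of integrating the pointwise inversion identity against $\dx^2\qb$ and, implicitly, of exchanging the $\qb$- and $\omega$-integrations; for general $f \in L^2$ and a window with poor decay this iterated integral need not converge absolutely. The safe route is to interpret the double integral in the statement as a weak ($L^2$-valued) integral: one tests both sides against an arbitrary $\psi \in L^2(\mathbb{R}^2,\mathbb{H})$ via the real scalar product \eqref{eq:Sc}, which turns everything into scalar integrals, applies Fubini's theorem once absolute integrability is secured for $f$ in a suitable dense class (e.g.\ $f$ with compactly supported QFT) and $g \in W^\alpha$, and then passes to the general case by density and the continuity of $f \mapsto \QT_g f$. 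This is precisely the bookkeeping carried out for the inversion of the complex short-time Fourier transform, and I would follow that template verbatim.
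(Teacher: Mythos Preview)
Your argument is correct and is exactly the standard route: recognize $\QT_g f(\qb,\cdot)$ as the QFT of $h_{\qb}=f\,T_{\qb}g$, apply the inversion \eqref{eq:REC}, multiply by the real scalar $g(\qx-\qb)$, and integrate out $\qb$. The paper itself does not prove this proposition but refers to \cite{BalianLowforWQFT}; what you wrote is precisely the expected computation, including the honest caveat about interpreting the double integral weakly and justifying Fubini on a dense subclass.
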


\begin{thm}[Orthogonality relation]
Let $g \in L^2(\mathbb{R}^2,\mathbb{R})$ be a non-zero window function and $f, h \in L^2(\mathbb{R}^2,\mathbb{H})$. Then,  $\QT_g f, \QT_g h \in L^2(\mathbb{R}^2,\mathbb{H})$ and
\[ \langle \QT_g f, \QT_g h \rangle = \|g\|^2_2 \langle f, h \rangle.  \]
\end{thm}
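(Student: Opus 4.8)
The plan is to mimic the classical Moyal/orthogonality argument for the short-time Fourier transform, but working throughout with the real scalar product $\langle\cdot,\cdot\rangle = \Sc\int f\overline g$ so as to sidestep the noncommutativity of $\QH$. First I would fix $\qb$ and regard the map $\qx\mapsto \exp(-2\pi i x_1\omega_1) f(\qx) g(\qx-\qb)\exp(-2\pi j x_2\omega_2)$, whose QFT in the variable $\qx$ (evaluated at $\omega$) is, by Definition of the QFT, exactly $\QT_g f(\qb,\omega)$. Thus for each fixed $\qb$ the function $\omega\mapsto\QT_g f(\qb,\omega)$ is the two-sided QFT of $f\,T_\qb g$. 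Applying the Plancherel theorem for the two-sided QFT (which follows from the inversion formula \eqref{eq:REC} in the same way as in the complex case; this is the quaternionic Moyal identity $\langle \widehat u,\widehat v\rangle = \langle u,v\rangle$ for the real scalar product) gives, for each fixed $\qb$,
\[
\Sc\int_{\mathbb{R}^2}\QT_g f(\qb,\omega)\,\overline{\QT_g h(\qb,\omega)}\dx^2\omega
= \Sc\int_{\mathbb{R}^2} f(\qx)\,\overline{g(\qx-\qb)}\,\overline{h(\qx)}\,g(\qx-\qb)\dx^2\qx,
\]
where I have used that $g$ is real-valued so that $g(\qx-\qb)$ commutes with everything and $\overline{T_\qb g\, \overline{T_\qb g}} = |g(\qx-\qb)|^2$.

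Next I would integrate this identity over $\qb\in\mathbb{R}^2$ and invoke Fubini's theorem (justified because, by the reconstruction/orthogonality estimates and $g\in L^2$, the double integral of $|f(\qx)|^2|g(\qx-\qb)|^2|h(\qx)|$-type bounds is finite — this is where I would cite $f,h\in L^2$ and Cauchy–Schwarz to get absolute integrability, and where the claim $\QT_g f,\QT_g h\in L^2(\mathbb{R}^2,\mathbb{H})$ drops out as a by-product by taking $h=f$). Swapping the order of integration, the inner integral over $\qb$ of $g(\qx-\qb)^2 = g(\qx-\qb)\overline{g(\qx-\qb)}$ is simply the constant $\int_{\mathbb{R}^2}|g(\qy)|^2\dx^2\qy = \|g\|_2^2$, independent of $\qx$. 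Pulling this scalar constant out leaves
\[
\langle\QT_g f,\QT_g h\rangle
= \Sc\int_{\mathbb{R}^2}\!\!\int_{\mathbb{R}^2}\QT_g f(\qb,\omega)\overline{\QT_g h(\qb,\omega)}\dx^2\omega\dx^2\qb
= \|g\|_2^2\,\Sc\int_{\mathbb{R}^2} f(\qx)\overline{h(\qx)}\dx^2\qx = \|g\|_2^2\langle f,h\rangle,
\]
which is the assertion.

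The main obstacle is the first step: establishing the quaternionic Plancherel identity for the two-sided QFT with respect to the \emph{real} scalar product, and making sure the sandwiched exponentials $\exp(-2\pi i x_1\omega_1)(\cdot)\exp(-2\pi j x_2\omega_2)$ do not spoil the $L^2$-isometry. Unlike the complex case one cannot simply write $|\widehat u(\omega)|^2$ and expand, because the two kernels sit on opposite sides; the clean route is to use the cyclic property \eqref{eq:SC} of $\Sc[\cdot]$ together with inversion \eqref{eq:REC}, writing $\Sc\int\widehat u\,\overline{\widehat v} = \Sc\int\big(\int e^{2\pi i x_1\omega_1}\overline{\widehat v(\omega)}e^{2\pi j x_2\omega_2}\dx^2\omega\big)\overline{\cdots}$ and collapsing one integral via the QFT inversion; the cyclicity of $\Sc$ is exactly what lets the left/right exponentials be moved into position so that the $\omega$-integral reproduces $\overline{v(\qx)}$ (resp. $u(\qx)$). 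A secondary, purely technical point is the Fubini justification, which I would handle by first proving the identity for $f,h,g$ in a dense subclass (say Schwartz functions, or $g\in W_0^\alpha$) where everything is absolutely convergent, and then extending by continuity using that both sides are bounded sesquilinear forms in $f$ and $h$.
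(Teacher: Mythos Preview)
The paper does not actually prove this theorem; immediately after stating it the authors write ``For the proofs we refer to \cite{BalianLowforWQFT}.'' So there is no in-paper argument to compare your proposal against.

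On its own merits, your approach is correct and is the standard one: for fixed $\qb$, recognise $\omega\mapsto\QT_g f(\qb,\omega)$ as the two-sided QFT of the $L^2$ function $\qx\mapsto f(\qx)\,g(\qx-\qb)$ (here the real-valuedness of $g$ is exactly what lets $g(\qx-\qb)$ commute into position between the two exponential kernels), apply the QFT Plancherel identity for the real scalar product in the $\omega$-variable, and then integrate the remaining factor $|g(\qx-\qb)|^2$ over $\qb$ to extract $\|g\|_2^2$. Your identification of the one nontrivial ingredient---that Plancherel for the \emph{two-sided} QFT must be proved for the scalar product $\langle\cdot,\cdot\rangle=\Sc\int(\cdot)\overline{(\cdot)}$, and that the cyclic property \eqref{eq:SC} of $\Sc$ is what allows the left and right exponentials to be rearranged---is exactly right, and the density-plus-continuity scheme you outline for the Fubini step is the usual way to make the argument rigorous.

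One small expository slip: in your opening sentence you write ``the map $\qx\mapsto \exp(-2\pi i x_1\omega_1) f(\qx) g(\qx-\qb)\exp(-2\pi j x_2\omega_2)$, whose QFT in the variable $\qx$ \ldots'', which is not what you mean (that map already carries the $\omega$-dependence). The correct statement, which you do give in the very next sentence, is that $\omega\mapsto\QT_g f(\qb,\omega)$ is the QFT of $\qx\mapsto f(\qx)g(\qx-\qb)$.
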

For the proofs we refer to \cite{BalianLowforWQFT}.

\subsection{The analysis and synthesis operators}
In applications we have to replace integrals over $\mathbb{R}^2 \times \mathbb{R}^2$ by sums over a four-dimensional lattice in space and frequency. Hence, the properties of the lattice are crucial for the decomposition and reconstruction of the signal. The question of synthesis of the signal is the main key of this paper, as it influences both its decomposition and reconstruction. For obtaining further results with respect to the synthesis we will require the following definitions and theorems. In this, we follow the notations of K. Gröchenig \cite{Grchenig2001}.
 
\begin{defn}[Quaternionic Gabor frame]
Given a non-zero $g \in L^2(\mathbb{R}^2,\mathbb{R})$ and lattice parameters $\alpha, \beta > 0$, then the set of space-frequency shifts 
\[ \Gq(g,\alpha,\beta) = \{ M_{\beta \qn} T_{\alpha \qm} g, \text{\  with \ } \qm, \qn \in \mathbb{Z}^2 \} \]
is called a \emph{quaternionic Gabor system}. The set $\Gq(g,\alpha,\beta)$ is a frame for $L^2(\mathbb{R}^2,\mathbb{H})$ if there exist real constants $A,B > 0$ such that
\[ A \|f\|^2 \leq \sum_{\qm,\qn \in \mathbb{Z}^2} |\langle f, M_{\beta \qn} T_{\alpha \qm} g\rangle|^2 \leq B \|f\|^2 \qquad \forall f \in L^2(\mathbb{R}^2,\mathbb{H}). \]
\end{defn}

\begin{defn}
For $\Gq(g, \alpha, \beta) = \{M_{\beta \qn} T_{\alpha \qm}g\,:\, \qm, \qn \in \mathbb{Z}^2\} \subseteq L^2(\mathbb{R}^2,\mathbb{H})$ we define the \emph{coefficient operator}, or \emph{analysis operator}, $C_q$ by 
\begin{equation*} \label{eq:Cq}
C_q f = \{\langle f, M_{\beta \qn} T_{\alpha \qm} g \rangle\,:\, \qm, \qn \in \mathbb{Z}^2\}. 
\end{equation*}
Conversely, given a real-valued sequence $c = \{c_{\qm,\qn}\,:\,\qm,\qn \in \mathbb{Z}^2 \}$ we define the \emph{reconstruction operator}, or \emph{synthesis operator}, $D_q$ by
\begin{equation*} \label{eq:Dq}
D_q c = \sum_{\qm, \qn \in \mathbb{Z}^2} \exp(2 \pi i \beta n_1) c_{\qm, \qn} g(\qx - \alpha \qm) \exp(2 \pi j \beta n_2) = \sum_{\qm,\qn \in \mathbb{Z}^2} c_{\qm, \qn}  \overline{M_{-\beta \qn}} T_{\alpha \qm} g 
\end{equation*}
In consequence, the associated \emph{frame operator} $S_q$ on $L^2(\mathbb{R}^2,\mathbb{H})$ has the following form
\begin{equation*} \label{eq:Sq}
 S_q f = D_q C_q f = \sum_{\qm,\qn \in \mathbb{Z}^2} \langle f, M_{\beta \qn} T_{\alpha \qm} g \rangle \overline{M_{-\beta \qn}} T_{\alpha \qm} g. 
\end{equation*}
\end{defn}

\subsection{The quaternionic Zak transform }
The quaternionic Zak transform is a natural extension of the Zak transform into the two-sided quaternionic setting. Therefore, most of its properties are preserved in this setting. In harmonic analysis the Zak transform is also known as \emph{Weil-Brezin map}.  
\begin{defn}[Quaternionic Zak transform]
The \emph{quaternionic Zak transform} of a quaternion-valued function $f \in L^2(\mathbb{R}^2,\mathbb{H})$ is defined as
\[ \ZT^\alpha_qf(\qx, \omega) = \sum_{\qm \in \mathbb{Z}^2} \exp(2 \pi j \alpha m_2 \omega_2) f(\qx - \alpha \qm) C_r(\exp(2 \pi i \alpha m_1 \omega_1)), \qquad \qx, \omega \in \mathbb{R}^2. \]
\end{defn}

The following theorem shows that the quaternionic Zak transform is a unitary operator, mapping $L^2(\mathbb{R}^2, \mathbb{H})$ onto $L^2(Q_{\alpha} \times Q_{1/\alpha}, \mathbb{H})$, up to a constant.
\begin{thm}\label{thm:41}
For $\varphi, \psi \in L^2(\mathbb{R}^2,\mathbb{H})$, it holds
\[ \langle \ZT^\alpha_q \varphi, \ZT^\alpha_q \psi \rangle = \alpha^{-2}\langle \varphi, \psi \rangle. \]
In particular,
\[ \|\ZT^{\alpha}_q \varphi\|^2_{L^2(Q_\alpha \times Q_{1/\alpha})} = \int_{Q_\alpha} \int_{Q_{1/\alpha}} |\ZT_q^{\alpha} \varphi(\qx, \omega)|^2 \dx^2 \qx \dx^2 \omega = \alpha^{-2} \|\varphi\|^2_2. \] 
\end{thm}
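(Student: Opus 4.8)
The plan is to mimic the classical proof that the Zak transform is (up to normalization) unitary, adapting it carefully to the two-sided quaternionic situation where one has to keep track of left versus right exponential factors. First I would reduce to the case $\alpha = 1$: the substitution $\qx \mapsto \alpha\qx$, $\omega \mapsto \omega/\alpha$ turns $\ZT^\alpha_q$ into $\ZT^1_q$ composed with a dilation, and produces exactly the factor $\alpha^{-2}$ and the change of domain from $Q_\alpha \times Q_{1/\alpha}$ to $Q \times Q$. So it suffices to prove $\langle \ZT_q\varphi, \ZT_q\psi\rangle = \langle \varphi,\psi\rangle$ with $\ZT_q = \ZT^1_q$ and the integration over $Q\times Q$.

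Next, I would write out $\ZT_q\varphi(\qx,\omega)\,\overline{\ZT_q\psi(\qx,\omega)}$ as a double sum over $\qm,\qm' \in \mathbb{Z}^2$ and take $\Sc$ of the integral over $Q\times Q$. Using the cyclic multiplication property (equation \eqref{eq:SC}) I can move the scalar-valued exponential carrier factors around inside $\Sc[\cdots]$; the key observation is that, after cycling, the $\omega$-dependence collects into a single factor $\exp(2\pi j(m_2 - m_2')\omega_2)(\cdots)\exp(2\pi i(m_1-m_1')\omega_1)$ sandwiching $\varphi(\qx-\qm)\overline{\psi(\qx-\qm')}$. Integrating in $\omega$ over $Q = [0,1]^2$ and using that $\int_0^1 \exp(2\pi j k \omega_2)\,d\omega_2 = \delta_{k,0}$ (and similarly for the $i$-exponential, noting these are genuinely scalar-valued integrals once one expands $\exp(2\pi j k\omega_2) = \cos(2\pi k\omega_2) + j\sin(2\pi k\omega_2)$) forces $\qm = \qm'$. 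What remains is $\Sc\sum_{\qm}\int_Q \varphi(\qx-\qm)\overline{\psi(\qx-\qm)}\dx^2\qx$, which by unfolding the sum over the unit-cube translates is exactly $\Sc\int_{\mathbb{R}^2}\varphi(\qx)\overline{\psi(\qx)}\dx^2\qx = \langle\varphi,\psi\rangle$.

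The main obstacle is the bookkeeping of noncommutativity: unlike the complex case, the two exponentials live on opposite sides of $\varphi$, so one cannot simply collapse cross terms by "pulling out" a single exponential. The trick that makes it work is that after applying $\Sc[\cdot]$ the cyclic property lets me rotate the carrier $C_r(\exp(2\pi i m_1'\omega_1))$ coming from $\overline{\ZT_q\psi}$ — which is really multiplication on the right by $\overline{\exp(2\pi i m_1'\omega_1)} = \exp(-2\pi i m_1'\omega_1)$ — to combine with the corresponding factor from $\ZT_q\varphi$, and likewise for the $j$-exponentials on the left; care is needed because $i$ and $j$ anticommute, so the order in which the two orthogonality integrals are performed matters and one should do, say, the $\omega_1$-integral and the $\omega_2$-integral separately, checking that the mixed $ij = k$ terms integrate to zero as well. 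A secondary technical point is justifying the interchange of summation and integration, which follows from absolute convergence once one notes the sum defining $\ZT_q\varphi$ converges in $L^2(Q\times Q)$ for $\varphi \in L^2(\mathbb{R}^2,\mathbb{H})$ — this is the standard argument that $\ZT_q$ maps onto $L^2(Q\times Q,\mathbb{H})$, and it can be proved simultaneously by the same Plancherel-type computation applied to $\varphi = \psi$ on compactly supported functions, then extended by density.

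Finally, the ``in particular'' statement is immediate: set $\psi = \varphi$ in the isometry, so that $\|\ZT^\alpha_q\varphi\|^2_{L^2(Q_\alpha\times Q_{1/\alpha})} = \langle\ZT^\alpha_q\varphi, \ZT^\alpha_q\varphi\rangle = \alpha^{-2}\langle\varphi,\varphi\rangle = \alpha^{-2}\|\varphi\|_2^2$, recalling from \eqref{eq:Sc} that $\langle\varphi,\varphi\rangle = \Sc\int_{\mathbb{R}^2}|\varphi(\qx)|^2\dx^2\qx = \|\varphi\|_2^2$.
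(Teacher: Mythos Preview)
The paper states Theorem~\ref{thm:41} without proof, so there is nothing to compare your argument against directly. Your approach is the standard one---reduce to $\alpha=1$ by dilation, expand the double sum, use the cyclic property of $\Sc[\cdot]$ to organize the exponential factors, integrate in $\omega$ to kill the off-diagonal terms, and then unfold the remaining sum over integer translates---and it is correct. One small inaccuracy in your description: after cycling, the $i$-exponential $\exp(2\pi i(m_1-m_1')\omega_1)$ sits \emph{between} $\varphi(\qx-\qm)$ and $\overline{\psi(\qx-\qm')}$, not outside sandwiching the product; but since you then correctly propose to expand $\exp(2\pi i k\omega_1)=\cos+i\sin$ and integrate each real scalar coefficient separately, this does not affect the argument. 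The orthogonality step genuinely reduces to $\int_0^1\cos(2\pi k\omega)\,d\omega=\delta_{k,0}$ and $\int_0^1\sin(2\pi k\omega)\,d\omega=0$, applied independently in $\omega_1$ and $\omega_2$, so no delicate $i$--$j$ interaction survives. Your remarks on justifying the interchange of sum and integral via $L^2$-convergence on a dense subspace are also appropriate.
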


\begin{prop}[Quasiperiodicity conditions]
The transform $\ZT^\alpha_q$ fulfills
\[ \ZT^\alpha_q f(\qx,\omega + \frac{\mathbf{1}}{\alpha}) = \ZT^\alpha_q f(\qx,\omega), \qquad \ZT_q^\alpha f(\qx + \mathbf{1}\alpha, \omega) = \exp(2 \pi j \alpha \omega_2) \ZT_q^\alpha f(\qx, \omega) \exp(2 \pi i \alpha \omega_1)). \]
\end{prop}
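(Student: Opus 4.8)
The plan is to verify the two quasiperiodicity identities directly from the series definition of $\ZT^\alpha_q$, exploiting the quaternion-valued carrier formalism exactly as it appears in the definition. The periodicity in the frequency variable is the purely routine one: substituting $\omega + \mathbf{1}/\alpha$ into
\[ \ZT^\alpha_q f(\qx,\omega) = \sum_{\qm \in \mathbb{Z}^2} \exp(2 \pi j \alpha m_2 \omega_2) f(\qx - \alpha \qm) C_r(\exp(2 \pi i \alpha m_1 \omega_1)), \]
each exponential factor picks up $\exp(2\pi j \alpha m_2 / \alpha) = \exp(2\pi j m_2) = 1$ on the left and, inside the right carrier, $\exp(2\pi i m_1) = 1$, since $m_1, m_2 \in \mathbb{Z}$; so the summand is unchanged term by term and the identity follows. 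I would dispatch this in one or two lines.

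For the shift identity, the strategy is a re-indexing of the lattice sum. Writing $\qx + \mathbf{1}\alpha$ in place of $\qx$ gives $f(\qx + \mathbf 1 \alpha - \alpha\qm) = f(\qx - \alpha(\qm - \mathbf 1))$, so I would substitute $\qm' = \qm - \mathbf 1$, i.e. $m_1' = m_1 - 1$, $m_2' = m_2 - 1$. Under this substitution the left exponential becomes $\exp(2\pi j \alpha (m_2'+1)\omega_2) = \exp(2\pi j \alpha \omega_2)\exp(2\pi j \alpha m_2' \omega_2)$, where I may pull the constant factor $\exp(2\pi j \alpha \omega_2)$ out of the sum on the \emph{left} because it does not depend on $\qm'$. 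Similarly the right carrier factor satisfies $C_r(\exp(2\pi i \alpha(m_1'+1)\omega_1)) = C_r\big(\exp(2\pi i \alpha m_1' \omega_1)\exp(2\pi i \alpha \omega_1)\big)$; since $C_r(pq)r = r(pq) = (rp)q = C_r(q)\big(C_r(p)r\big)$, applying $C_r(\exp(2\pi i \alpha \omega_1))$ \emph{after} (i.e. on the outside, multiplying on the right) the whole sum reproduces the factor $\exp(2\pi i \alpha \omega_1)$. Collecting these, the re-indexed sum is exactly $\exp(2\pi j \alpha \omega_2)\,\ZT^\alpha_q f(\qx,\omega)\,\exp(2\pi i \alpha \omega_1)$, which is the claimed formula.

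The one genuine point of care — and the only place noncommutativity can bite — is that both extracted factors must be pulled out on the \emph{correct side}: the modulation in $j$ multiplies on the left of every summand, so $\exp(2\pi j \alpha \omega_2)$ factors out on the left; the $i$-exponential sits inside a \emph{right} carrier, so $\exp(2\pi i \alpha \omega_1)$ factors out on the right, and crucially these two operations do not interfere with each other precisely because one acts from the left and the other from the right of the quaternion-valued summand $f(\qx-\alpha\qm')$. This is why the left/right (sandwiched) placement of the two kernels is essential; with a one-sided transform one would not get a clean conjugation-type relation. Beyond keeping track of sides, there is no analytic obstacle: the rearrangement is a finite manipulation inside each term followed by a formal re-indexing of the (absolutely convergent, for $f$ in a suitable dense class, then extended by continuity using Theorem~\ref{thm:41}) lattice sum.
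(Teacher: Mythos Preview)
Your argument is correct: the direct term-by-term verification for the $\omega$-periodicity and the lattice re-indexing $\qm' = \qm - \mathbf{1}$ for the $\qx$-shift, with the $j$-factor extracted on the left and the $i$-factor (via the right carrier) extracted on the right, is exactly the right computation, and your remark that the two extractions do not interfere because they act on opposite sides is the key point in the noncommutative setting. The paper itself states this proposition without proof, so there is nothing to compare against; your direct verification is the standard and expected argument.
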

Thus, $\ZT_q^\alpha$ is periodic and it is sufficient to know its values in the cube $Q_\alpha \times Q_{1/\alpha}$. In the next Lemma we proof an inversion formula for the signal $f$ by means of its Zak transform. Combining these results, we conclude with the reconstruction of the signal $f$ based on the values of its Zak transform on the cube $Q_{1/\alpha}$.

\begin{thm}[Inversion formula \cite{Grchenig2001}]
If $f \in L^2(\mathbb{R}^2,\mathbb{H})$ then, 
\[ f(\qx) = \alpha^{2} \int_{Q_{1/\alpha}} \ZT^\alpha_q f(\qx, \omega) \dx^2 \omega, \qquad \qx \in \mathbb{R}^2. \]
\end{thm}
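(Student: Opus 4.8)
The plan is to evaluate the right-hand side directly, by substituting the defining series of the quaternionic Zak transform and integrating term by term. Writing
\[ \ZT_q^\alpha f(\qx,\omega) = \sum_{\qm \in \mathbb{Z}^2} \exp(2\pi j \alpha m_2 \omega_2)\, f(\qx - \alpha\qm)\, \exp(2\pi i \alpha m_1 \omega_1), \]
I would integrate over $\omega \in Q_{1/\alpha} = [0,1/\alpha]^2$ and observe that in the $\qm$-th term the variable $\omega_1$ appears only in the rightmost exponential and $\omega_2$ only in the leftmost one. Since left (resp.\ right) multiplication by a fixed quaternion is $\mathbb{R}$-linear, the factor $f(\qx-\alpha\qm)$ may be pulled out of the $\omega_1$-integral and the resulting constant out of the $\omega_2$-integral, so by Fubini the $\omega$-integral of the $\qm$-th term equals
\[ \Big(\int_0^{1/\alpha}\!\! \exp(2\pi j\alpha m_2\omega_2)\,\dx\omega_2\Big)\, f(\qx-\alpha\qm)\, \Big(\int_0^{1/\alpha}\!\! \exp(2\pi i\alpha m_1\omega_1)\,\dx\omega_1\Big). \]
Working inside the subalgebra generated by $i$ (resp.\ $j$), which is isomorphic to $\mathbb{C}$, each scalar factor equals $1/\alpha$ when the corresponding index vanishes and $0$ otherwise, since $\int_0^{1/\alpha}\exp(2\pi i\alpha m\omega)\dx\omega = \frac{1}{2\pi i\alpha m}\big(e^{2\pi i m}-1\big)=0$ for $m\in\mathbb{Z}\setminus\{0\}$. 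Hence only the term $\qm=\mathbf{0}$ survives, contributing $\alpha^{-2}f(\qx)$, and multiplication by $\alpha^2$ yields the claimed identity. (Conceptually this is just the statement that integrating a $\tfrac{1}{\alpha}$-periodic quaternionic Fourier series in $\omega$ over one period extracts its zeroth coefficient; the quasiperiodicity already guarantees such a representation.)

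The one point requiring care is the interchange of $\sum_{\qm}$ with $\int_{Q_{1/\alpha}}$, together with the fact that the left-hand side is a genuine pointwise value of $f$. I would therefore first prove the identity on a dense subspace of well-behaved functions --- for instance continuous, compactly supported quaternion-valued $f$ (or Schwartz-class) --- for which the defining series is locally a finite sum, so that term-by-term integration is immediate and $\ZT_q^\alpha f(\qx,\cdot)$ is honestly defined pointwise; on this subspace the computation above gives the inversion formula for every $\qx$.

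To extend to arbitrary $f\in L^2(\mathbb{R}^2,\mathbb{H})$, I would invoke the unitarity of $\ZT_q^\alpha$ from Theorem~\ref{thm:41}: choosing $f_n\to f$ in $L^2$ with $f_n$ in the dense subspace gives $\ZT_q^\alpha f_n\to\ZT_q^\alpha f$ in $L^2(Q_\alpha\times Q_{1/\alpha},\mathbb{H})$, and since $Q_{1/\alpha}$ has finite measure, Cauchy--Schwarz shows that $F\mapsto\alpha^2\int_{Q_{1/\alpha}}F(\cdot,\omega)\dx^2\omega$ is a bounded map $L^2(Q_\alpha\times Q_{1/\alpha},\mathbb{H})\to L^2(Q_\alpha,\mathbb{H})$; the $\alpha$-quasiperiodicity in $\qx$ then lets one pass from $Q_\alpha$ to all of $\mathbb{R}^2$. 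Both sides of the asserted formula thus depend $L^2$-continuously on $f$, so the identity, valid on a dense set, persists for all $f$ --- now read as an equality of $L^2$-functions, i.e.\ for almost every $\qx$. The main obstacle is precisely the bookkeeping in this last step: one must use the quasiperiodicity to reduce everything to the fundamental cube $Q_\alpha$ so that $L^2$-convergence of the Zak transforms there genuinely controls $f$ itself.
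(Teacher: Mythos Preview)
Your argument is correct and is exactly the standard one: integrate the defining series over $Q_{1/\alpha}$, use that $\int_0^{1/\alpha}\exp(2\pi i\alpha m\omega)\dx\omega=\alpha^{-1}\delta_{m,0}$ in each of the two commuting complex subfields, and justify the interchange of sum and integral first on a dense subspace (where the series is locally finite) and then by continuity via Theorem~\ref{thm:41}. There is nothing to compare against here: the paper does not supply its own proof of this inversion formula but merely states it with a reference to Gr\"ochenig, so your proposal in fact fills in what the paper omits.
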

This allows us to recapture the whole function $f$ solely by the values of $\ZT_q^\alpha f$ on $Q_{1/\alpha}$. 

\begin{lem}\label{lem:lem37}
We have for the Zak transform of 
$M_{\frac{\qk}{\alpha}} T_{\alpha \qn} g$
\begin{align*}
&\ZT_q^{\alpha} (M_{\frac{\qk}{\alpha}} T_{\alpha \qn} g)(\qx, \omega) \\
&= \exp(-2 \pi j \alpha n_2 \omega_2) \exp(2 \pi j k_2 \frac{x_2}{\alpha}) \ZT_q^{\alpha} g(\qx, \omega)\exp(2 \pi i k_1 \frac{x_1}{\alpha})\exp(-2 \pi i \alpha n_1 \omega_1).
\end{align*}
\end{lem}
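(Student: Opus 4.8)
The plan is to compute $\ZT_q^\alpha(M_{\qk/\alpha}T_{\alpha\qn}g)$ directly from the definitions, manipulating the defining series term by term (it converges in the same sense as the series defining $\ZT_q^\alpha$, so no analytic subtlety arises beyond that). First I would unfold the window: by the definition of the translation and modulation operators,
\[ (M_{\qk/\alpha}T_{\alpha\qn}g)(\qx) = \exp\!\big(2\pi j\tfrac{k_2}{\alpha}x_2\big)\,g(\qx-\alpha\qn)\,\exp\!\big(2\pi i\tfrac{k_1}{\alpha}x_1\big). \]
Substituting the shifted argument $\qx-\alpha\qm$ into this produces extra factors $\exp(-2\pi j k_2 m_2)$ on the left and $\exp(-2\pi i k_1 m_1)$ on the right; the point is that $k_1 m_1, k_2 m_2\in\mathbb{Z}$, so both of these equal $1$, and the modulation exponentials survive the shift unchanged. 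Plugging into $\ZT_q^\alpha(M_{\qk/\alpha}T_{\alpha\qn}g)(\qx,\omega)=\sum_{\qm}\exp(2\pi j\alpha m_2\omega_2)\,(M_{\qk/\alpha}T_{\alpha\qn}g)(\qx-\alpha\qm)\,\exp(2\pi i\alpha m_1\omega_1)$ then gives a sum over $\qm$ of $\exp(2\pi j\alpha m_2\omega_2)\exp(2\pi j\tfrac{k_2}{\alpha}x_2)\,g(\qx-\alpha(\qm+\qn))\,\exp(2\pi i\tfrac{k_1}{\alpha}x_1)\exp(2\pi i\alpha m_1\omega_1)$.

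Next I would rearrange the exponential factors, using the fact that two exponentials built from the \emph{same} imaginary unit commute — $\exp(ja)\exp(jb)=\exp(j(a+b))=\exp(jb)\exp(ja)$, and likewise for $i$ — while an $i$-exponential and a $j$-exponential do \emph{not} commute, nor does either commute past the quaternion $g(\cdot)$. Hence $\exp(2\pi j\tfrac{k_2}{\alpha}x_2)$ may be pulled to the far left of the summand and out of the sum, and $\exp(2\pi i\tfrac{k_1}{\alpha}x_1)$ to the far right and out of the sum, but no further. Reindexing by the bijection $\ql=\qm+\qn$ of $\mathbb{Z}^2$ and splitting $\exp(2\pi j\alpha m_2\omega_2)=\exp(-2\pi j\alpha n_2\omega_2)\exp(2\pi j\alpha l_2\omega_2)$ and $\exp(2\pi i\alpha m_1\omega_1)=\exp(2\pi i\alpha l_1\omega_1)\exp(-2\pi i\alpha n_1\omega_1)$, the $\qn$-dependent factors come outside the sum on the correct sides, and what is left is exactly $\sum_{\ql}\exp(2\pi j\alpha l_2\omega_2)\,g(\qx-\alpha\ql)\,\exp(2\pi i\alpha l_1\omega_1)=\ZT_q^\alpha g(\qx,\omega)$.

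Collecting the pieces yields
\[ \ZT_q^\alpha(M_{\qk/\alpha}T_{\alpha\qn}g)(\qx,\omega)=\exp\!\big(2\pi j\tfrac{k_2}{\alpha}x_2\big)\exp(-2\pi j\alpha n_2\omega_2)\,\ZT_q^\alpha g(\qx,\omega)\,\exp(-2\pi i\alpha n_1\omega_1)\exp\!\big(2\pi i\tfrac{k_1}{\alpha}x_1\big), \]
which is the claimed identity up to interchanging the two left $j$-exponentials and the two right $i$-exponentials — legitimate by same-unit commutativity.

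The main obstacle is purely bookkeeping of non-commutativity: one must be scrupulous never to move an $i$-exponential past a $j$-exponential or past a value of $g$, and must check at each step that every factor of the form $\exp$ of an integer multiple of $2\pi i$ or $2\pi j$ genuinely collapses to $1$ (this is where $k_1 m_1,k_2 m_2\in\mathbb{Z}$ is used). The reindexing $\ql=\qm+\qn$ and the interchange of summation with the scalar-exponential prefactors are routine.
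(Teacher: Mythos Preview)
Your proof is correct and is precisely the direct computation the paper has in mind; the paper itself omits the argument as ``an easy calculation.'' One minor remark: you caution against commuting exponentials past ``the quaternion $g(\cdot)$,'' but in this paper $g$ is real-valued (the window lies in $L^2(\mathbb{R}^2,\mathbb{R})$), so $g(\qx-\alpha(\qm+\qn))$ in fact commutes with everything --- your extra care does no harm, but it is not needed here.
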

The proof is an easy calculation and, therefore, it will be omitted. Nevertheless, the result is useful for the next lemma. At this point we remark that this corresponds to assume $\beta = \frac{1}{\alpha}$, that is the critical density case.
 
\begin{lem}\label{lem:47}
If $f \in L^2(\mathbb{R}^2,\mathbb{H})$ and $g \in L^2(\mathbb{R}^2, \mathbb{R})$, then we have
\[ \sum_{\qk, \qn \in \mathbb{Z}^2} |\langle f, M_{\frac{\qk}{\alpha}} T_{\alpha \qn} g \rangle|^2 = \alpha^4 \|\ZT_q^{\alpha} f \overline{\ZT_q^{\alpha} g} \|^2_{L^2(Q_\alpha \times Q_{1/\alpha})}.  \]
\end{lem}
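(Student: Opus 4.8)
The plan is to transport everything to the Zak domain, recognise the Gabor coefficients there as (essentially) the coefficients of a quaternionic Fourier series attached to the product $\ZT^\alpha_q f\,\overline{\ZT^\alpha_q g}$, and close the sum by a Plancherel identity. First I would use Theorem~\ref{thm:41} to write, for every $\qk,\qn\in\mathbb{Z}^2$,
\[ \langle f, M_{\qk/\alpha}T_{\alpha\qn}g\rangle = \alpha^{2}\,\langle \ZT^\alpha_q f,\ \ZT^\alpha_q(M_{\qk/\alpha}T_{\alpha\qn}g)\rangle, \]
where throughout $\langle\cdot,\cdot\rangle$ is the $\QH$-valued pairing $(f,g)=\int f\overline{g}$ and $|\cdot|$ the quaternion modulus. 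Then Lemma~\ref{lem:lem37}, together with the fact that conjugation reverses products and that $g$ is real-valued, gives
\[ \ZT^\alpha_q f\,\overline{\ZT^\alpha_q(M_{\qk/\alpha}T_{\alpha\qn}g)} = \ZT^\alpha_q f\, e^{2\pi i(\alpha n_1\omega_1 - k_1 x_1/\alpha)}\,\overline{\ZT^\alpha_q g}\, e^{2\pi j(\alpha n_2\omega_2 - k_2 x_2/\alpha)}, \]
after merging the two exponentials that lie in the common commutative plane $\mathrm{span}\{1,i\}$ (resp.\ $\mathrm{span}\{1,j\}$).

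Next I would rescale to the unit cube by $\xi=\qx/\alpha$, $\eta=\alpha\omega$; this has unit Jacobian, sends $Q_\alpha\times Q_{1/\alpha}$ to $[0,1]^4$, and turns the phases into $2\pi(n_1\eta_1-k_1\xi_1)$ and $2\pi(n_2\eta_2-k_2\xi_2)$. Writing $A,B$ for the rescaled $\ZT^\alpha_q f$ and $\overline{\ZT^\alpha_q g}$, one then has $\alpha^{-2}\langle f, M_{\qk/\alpha}T_{\alpha\qn}g\rangle = \int_{[0,1]^4} A\, e^{2\pi i(n_1\eta_1-k_1\xi_1)}\,B\, e^{2\pi j(n_2\eta_2-k_2\xi_2)}\,d\xi\,d\eta =: C_{\qk,\qn}$. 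The family $\{C_{\qk,\qn}\}_{(\qk,\qn)\in\mathbb{Z}^4}$ is the set of coefficients of a quaternionic Fourier series attached to $AB$ (the two merged exponentials, one in the $(\xi_1,\eta_1)$-pair carrying $i$ and one in the $(\xi_2,\eta_2)$-pair carrying $j$, play the role of the two-sided QFT kernel). The Plancherel theorem for this series, together with multiplicativity of the quaternion modulus $|pq|=|p||q|$, yields $\sum_{\qk,\qn}|C_{\qk,\qn}|^2 = \int_{[0,1]^4}|AB|^2 = \int_{[0,1]^4}|A|^2|B|^2$; undoing the rescaling gives exactly $\alpha^4\|\ZT^\alpha_q f\,\overline{\ZT^\alpha_q g}\|^2_{L^2(Q_\alpha\times Q_{1/\alpha})}$, and a Fubini argument (valid because $\ZT^\alpha_q f,\ZT^\alpha_q g\in L^2$, so $\ZT^\alpha_q f\,\overline{\ZT^\alpha_q g}\in L^1(Q_\alpha\times Q_{1/\alpha})$) legitimises exchanging the sum with the integrals.

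The step I expect to be the main obstacle is the Plancherel identity $\sum_{\qk,\qn}|C_{\qk,\qn}|^2=\int|AB|^2$ itself: because of non-commutativity the two exponentials sit \emph{inside} the product $AB$, not around it, so scalar Parseval cannot be quoted directly (indeed, with the real-scalar pairing $\Sc\!\int f\overline{g}$ in place of the $\QH$-valued one the asserted equality already fails). The concrete way I would carry it out is to split $A=A_1+A_2j$ and $B=B_1+B_2j$ into their $\mathrm{span}\{1,i\}$-components (Cayley--Dickson form), expand $A\, e^{2\pi i(\cdots)}\,B\, e^{2\pi j(\cdots)}$ term by term using $j z=\overline{z}\,j$ for $z\in\mathrm{span}\{1,i\}$, and integrate: every resulting term is a genuine complex Fourier coefficient of a product of two of the $A_\ell,B_m$, so two applications of the classical Parseval identity (one in the $(\xi_1,\eta_1)$-pair, one in the $(\xi_2,\eta_2)$-pair) collapse the double sum. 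Checking that all cross terms between the $A_\ell$'s and $B_m$'s cancel, leaving precisely $\int(|A_1|^2+|A_2|^2)(|B_1|^2+|B_2|^2)=\int|A|^2|B|^2$, is the only genuinely non-routine computation; it is also the point at which the hypothesis that $g$ is real-valued — keeping $g(\qx-\alpha\qn)$ central throughout the Zak-domain manipulations, so that the conjugate in Lemma~\ref{lem:lem37} lands in the symmetric position displayed above — is essential.
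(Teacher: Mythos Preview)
Your skeleton --- pass to the Zak side via Theorem~\ref{thm:41}, unwrap $\ZT^\alpha_q(M_{\qk/\alpha}T_{\alpha\qn}g)$ with Lemma~\ref{lem:lem37}, then close with a Parseval identity on $Q_\alpha\times Q_{1/\alpha}$ --- is exactly the paper's. The divergence is in how the $i$-exponential wedged between $\ZT^\alpha_q f$ and $\overline{\ZT^\alpha_q g}$ is dislodged. The paper keeps the real scalar product $\langle\cdot,\cdot\rangle=\Sc\!\int(\cdot)\overline{(\cdot)}$ throughout and uses the cyclic identity $\Sc[qrs]=\Sc[rsq]$ (Lemma~1) to slide that exponential around to the right of $\ZT^\alpha_q f\,\overline{\ZT^\alpha_q g}$; with all four exponentials gathered on one side it simply declares the resulting family an orthonormal basis of $L^2(Q_\alpha\times Q_{1/\alpha},\QH)$ and quotes Parseval in one line --- no rescaling, no Cayley--Dickson split. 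Your route via the $\QH$-valued pairing and a componentwise reduction to complex Parseval is longer but more explicit, and your misgiving about the real-scalar version is justified: the exponential system the paper writes down is orthonormal but not complete as a \emph{real} Hilbert basis (the constant function $i$ is $\Sc$-orthogonal to every element), so the paper's final step tacitly needs a $\QH$-module Parseval of the kind your Cayley--Dickson computation would actually furnish.

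There is, however, a genuine gap on your side as well. Theorem~\ref{thm:41} is stated and proved only for the scalar pairing, yet your first displayed equation asserts it for the full $\QH$-valued pairing $(\cdot,\cdot)$. This upgrade fails for the two-sided Zak transform: the outer $e^{2\pi j(\cdot)}$ factors do not commute with a general quaternionic integrand, and one can check directly that, for instance, $\varphi=i\,\chi_{[0,1]\times[-1,0]}$ and $\psi=\chi_{[0,1]\times[-1,0]}$ give $(\varphi,\psi)=i$ while $(\ZT_q\varphi,\ZT_q\psi)_{L^2(Q\times Q)}=\int_0^1 e^{2\pi j\omega_2}\,i\,e^{-2\pi j\omega_2}\,d\omega_2=\int_0^1 i\,e^{-4\pi j\omega_2}\,d\omega_2=0$. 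So the identity $(f,M_{\qk/\alpha}T_{\alpha\qn}g)=\alpha^{2}(\ZT^\alpha_q f,\ZT^\alpha_q(M_{\qk/\alpha}T_{\alpha\qn}g))$ you start from is not available. You would need either to justify a quaternionic isometry of $\ZT^\alpha_q$ separately, or to bypass it --- e.g.\ by performing the Cayley--Dickson split already on $f$ before ever applying $\ZT^\alpha_q$, or by reverting to the $\Sc$-pairing for the first step (where Theorem~\ref{thm:41} does apply and the cyclic trick handles the middle exponential) and only then analysing the completeness issue.
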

\begin{proof}
By Theorem \ref{thm:41} and Lemma \ref{lem:lem37}
\begin{align*}
&\sum_{\qk, \qn \in \mathbb{Z}^2} |\langle f, M_{\frac{\qk}{\alpha}} T_{\alpha \qn} g \rangle|^2\\
& = \sum_{\qk, \qn \in \mathbb{Z}^2} |\langle \alpha \ZT_q^{\alpha} f, \alpha \ZT_q^{\alpha} M_{\frac{\qk}{\alpha}} T_{\alpha \qn} g \rangle|^2\\
& = \alpha^4 \sum_{\qk, \qn \in \mathbb{Z}^2} | \Sc \int_{Q_{\alpha}} \int_{Q_{1/\alpha}} \ZT_q^{\alpha} f \  \overline{\ZT_q^{\alpha} M_{\frac{\qk}{\alpha}} T_{\alpha \qn}} \dx^2 \qx \dx^2 \omega|^2\\
& = \alpha^4 \sum_{\qk, \qn \in \mathbb{Z}^2} | \Sc \int_{Q_{\alpha}} \int_{Q_{1/\alpha}} \ZT_q^\alpha f C_l(\exp(2 \pi i ( \alpha n_1 \omega_1 - \frac{k_1}{\alpha} x_1)))\\
&\qquad  \overline{\ZT_q^{\alpha}g} \exp(-2 \pi j \frac{k_2}{\alpha} x_2)\exp(2 \pi j \alpha n_2 \omega_2) \dx^2 \qx \dx^2 \omega|^2\\
&= \alpha^2 \sum_{\qk, \qn \in \mathbb{Z}^2} | \Sc \int_{Q_{\alpha}} \int_{Q_{1/\alpha}} \exp(2 \pi i \alpha n_1 \omega_1) \exp(-2 \pi i \frac{k_1}{\alpha} x_1) \ZT_q^{\alpha} f \overline{\ZT_q^{\alpha} g} \\
&\qquad  \exp(-2 \pi j \frac{k_2}{\alpha} x_2) \exp(2 \pi j \alpha n_2 \omega_2) \dx^2 \qx \dx^2 \omega|^2
\end{align*}
\begin{align*}
&= \alpha^{4} \sum_{\qk, \qn \in \mathbb{Z}^2} |\Sc \int_{Q_{\alpha}} \int_{Q_{1/\alpha}}  \ZT^\alpha_q f \overline{\ZT^\alpha_q g} \exp(-2 \pi j \frac{k_2}{\alpha} x_2) \exp(2 \pi j \alpha n_2 \omega_2)\\
&\qquad \exp(2 \pi i \alpha n_1 \omega_1) \exp(-2 \pi i \frac{k_1}{\alpha} x_1) \dx^2 \qx \dx^2 \omega|^2\\
&= \alpha^{4} \sum_{\qk, \qn \in \mathbb{Z}^2} |\langle \ZT_q^{\alpha} f \overline{\ZT^\alpha_q g}, \exp(-2 \pi j \frac{k_2}{\alpha} x_2) \exp(2 \pi j \alpha n_2 \omega_2) \exp(2 \pi i \alpha n_1 \omega_1) \exp(-2 \pi i \frac{k_1}{\alpha} x_1) \rangle|^2 \\
&= \alpha^4 \|\ZT_q^\alpha f \overline{\ZT_q^{\alpha} g}\|^2_{L^2(Q_\alpha \times Q_{1/\alpha})}
\end{align*}
since $\exp(-2 \pi j \frac{k_2}{\alpha} x_2) \exp(2 \pi j \alpha n_2 \omega_2) \exp(2 \pi i \alpha n_1 \omega_1) \exp(-2 \pi i \frac{k_1}{\alpha}x_1)$ is an orthonormal basis for $L^2(Q_{\alpha} \times Q_{1/\alpha}, \mathbb{H})$.
\end{proof}

\section{Some density results for $\Gq(g, \alpha, \beta)$}
In this section we use the quaternionic Zak transform to show under which conditions a quaternionic Gabor system generates a quaternionic Gabor frame.
\begin{lem}\label{lem:GqFrame} 
For $g \in L^2(\mathbb{R}^2, \mathbb{R}),$ and $\alpha >0$ we have that  $\Gq(g, \alpha, \frac{1}{\alpha})$ is a frame for $L^2(\mathbb{R}^2,\mathbb{H})$ if and only if there exist $0 < a \leq b < \infty$ such that
\[ 0 < a \leq |\ZT_q^{\alpha} g(x, \omega)|^2 \leq b < \infty \qquad \text{almost everywhere in } Q_{\alpha} \times Q_{1/\alpha}.\]
Moreover, the optimal frame  bounds are then given by
\begin{align*}
A_{\text{opt}} &= \alpha^2 \essinf_{(\qx, \omega) \in Q_\alpha \times Q_{1/\alpha}} |\ZT_q^\alpha g(\qx, \omega)|^2,\\
B_{\text{opt}} &= \alpha^2 \esssup_{(\qx, \omega) \in Q_\alpha \times Q_{1/\alpha}} |\ZT_q^\alpha g(\qx, \omega)|^2.
\end{align*}
\end{lem}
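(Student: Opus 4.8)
The plan is to mimic the classical (complex) Zak-transform characterization of Gabor frames at critical density, using Lemma~\ref{lem:47} as the bridge. Recall that in the classical case one shows $\Gabor(g,\alpha,1/\alpha)$ is a frame iff $|Zg|$ is bounded above and below a.e., with the frame bounds coming directly from the essential sup/inf of $|Zg|^2$. Here the identity from Lemma~\ref{lem:47},
\[
\sum_{\qk,\qn\in\mathbb{Z}^2}|\langle f, M_{\qk/\alpha}T_{\alpha\qn}g\rangle|^2 = \alpha^4\,\big\|\ZT_q^\alpha f\,\overline{\ZT_q^\alpha g}\big\|^2_{L^2(Q_\alpha\times Q_{1/\alpha})},
\]
does almost all the work, so the frame inequalities $A\|f\|_2^2\le\sum|\langle f,\cdot\rangle|^2\le B\|f\|_2^2$ become, after also invoking Theorem~\ref{thm:41} in the form $\|\ZT_q^\alpha f\|^2_{L^2(Q_\alpha\times Q_{1/\alpha})}=\alpha^{-2}\|f\|_2^2$,
\[
A\,\alpha^{-2}\|\ZT_q^\alpha f\|^2_{L^2} \le \alpha^4\int_{Q_\alpha}\int_{Q_{1/\alpha}} |\ZT_q^\alpha f(\qx,\omega)|^2\,|\ZT_q^\alpha g(\qx,\omega)|^2\,\dx^2\qx\,\dx^2\omega \le B\,\alpha^{-2}\|\ZT_q^\alpha f\|^2_{L^2}.
\]
(One must check that $|\ZT_q^\alpha(f\,\overline{g})| = |\ZT_q^\alpha f\,\overline{\ZT_q^\alpha g}|$ pointwise reduces to $|\ZT_q^\alpha f|\,|\ZT_q^\alpha g|$ by multiplicativity of the quaternion modulus; this is where the two-sided structure must be handled, but $|pq|=|p||q|$ holds in $\QH$, so it goes through.)

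For the \textbf{sufficiency} direction, suppose $0<a\le|\ZT_q^\alpha g|^2\le b<\infty$ a.e. on $Q_\alpha\times Q_{1/\alpha}$. Since $\ZT_q^\alpha f\in L^2(Q_\alpha\times Q_{1/\alpha},\QH)$ by Theorem~\ref{thm:41}, multiplying by the bounded weight $|\ZT_q^\alpha g|^2$ gives
\[
a\int_{Q_\alpha}\int_{Q_{1/\alpha}}|\ZT_q^\alpha f|^2 \le \int_{Q_\alpha}\int_{Q_{1/\alpha}}|\ZT_q^\alpha f|^2|\ZT_q^\alpha g|^2 \le b\int_{Q_\alpha}\int_{Q_{1/\alpha}}|\ZT_q^\alpha f|^2,
\]
and then substituting back via Lemma~\ref{lem:47} and Theorem~\ref{thm:41} yields the frame inequalities with $A=\alpha^2 a$, $B=\alpha^2 b$; taking $a=\essinf|\ZT_q^\alpha g|^2$ and $b=\esssup|\ZT_q^\alpha g|^2$ gives the claimed optimal bounds (optimality follows because equality is approached by concentrating $\ZT_q^\alpha f$ near points where $|\ZT_q^\alpha g|^2$ nears its essential extrema — see below).

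For the \textbf{necessity} direction, assume $\Gq(g,\alpha,1/\alpha)$ is a frame with bounds $A,B$. For the upper bound: if $|\ZT_q^\alpha g|^2$ were essentially unbounded, choose a set $E\subseteq Q_\alpha\times Q_{1/\alpha}$ of positive but finite measure on which $|\ZT_q^\alpha g|^2\ge N$; by the surjectivity of $\ZT_q^\alpha$ onto $L^2(Q_\alpha\times Q_{1/\alpha},\QH)$ (Theorem~\ref{thm:41} plus the inversion formula), pick $f$ with $\ZT_q^\alpha f = \mathcal{X}_E$, so the left side of Lemma~\ref{lem:47} is $\ge\alpha^4 N|E|$ while $\|f\|_2^2=\alpha^2|E|$, contradicting the upper frame bound for large $N$. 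Symmetrically, if $\essinf|\ZT_q^\alpha g|^2 = 0$ (including if $\ZT_q^\alpha g$ vanishes on a set of positive measure), choose $E_n$ with $|\ZT_q^\alpha g|^2<1/n$ on $E_n$, $|E_n|>0$, put $\ZT_q^\alpha f_n=\mathcal{X}_{E_n}$, and get $\sum|\langle f_n,\cdot\rangle|^2 < \alpha^4(1/n)|E_n|$ while $\|f_n\|_2^2 = \alpha^2|E_n|$, violating the lower frame bound. The main subtlety — and the step I expect to require the most care — is justifying that $\ZT_q^\alpha$ is genuinely onto $L^2(Q_\alpha\times Q_{1/\alpha},\QH)$ as a module/real-linear map (so that the indicator functions $\mathcal{X}_E$, or quasiperiodic extensions thereof, are legitimate images of honest $L^2(\mathbb{R}^2,\QH)$ signals), since one must respect the quasiperiodicity relations from the Quasiperiodicity Proposition rather than freely prescribing values on the fundamental cube; one resolves this by extending $\mathcal{X}_E$ quasiperiodically using those relations and checking the extension lies in $L^2(\mathbb{R}^2,\QH)$ via the inversion formula. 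The same quasiperiodic test functions, localized around essential-extremum points of $|\ZT_q^\alpha g|^2$, also deliver optimality of $A_{\mathrm{opt}}$ and $B_{\mathrm{opt}}$.
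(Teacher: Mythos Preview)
Your proposal is correct and follows essentially the same route as the paper: both use Lemma~\ref{lem:47} together with Theorem~\ref{thm:41} to convert the frame inequalities into pointwise bounds on $|\ZT_q^\alpha g|^2$, and then read off the optimal bounds from the essential infimum and supremum. Your necessity argument via indicator-function test vectors (and your explicit attention to surjectivity of $\ZT_q^\alpha$ and the multiplicativity $|pq|=|p||q|$) in fact supplies details that the paper's proof leaves implicit when it simply asserts ``This implies $a\le|\ZT_q^\alpha g(\qx,\omega)|^2\le b$ almost everywhere.''
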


\begin{proof} For the first statement, assume that $\Gq(g, \alpha, \frac{1}{\alpha})$ is a frame for $L^2(\mathbb{R}^2,\mathbb{H}),$ that is to say, 
\[ A \| f\|_2^2 \leq \sum_{\qk, \qn \in \mathbb{Z}^2} | \langle  f, M_{\frac{\qk}{\alpha}} T_{\qn \alpha} g\rangle   | \leq B \| f\|_2^2.\]
By Lemma \ref{lem:47} we get that, for all $F = \ZT_q^{\alpha} f \in L^2(Q_{\alpha} \times Q_{1/\alpha},\mathbb{H})$, it holds
\[ A \alpha^2 \|F\|^2_{L^2(Q_{\alpha} \times Q_{1/\alpha})} \leq \alpha^4 \|F \overline{\ZT_q^{\alpha} g}\|_{L^2(Q_{\alpha} \times Q_{1/\alpha})}^2 \leq B \alpha^2 \|F\|^2_{L^2(Q_{\alpha} \times Q_{1/\alpha})}, \]
or
\[ A \|F\|^2_{L^2(Q_{\alpha} \times Q_{1/\alpha})} \leq \alpha^2 \|F \overline{\ZT_q^{\alpha} g}\|_{L^2(Q_{\alpha} \times Q_{1/\alpha})}^2 \leq B \|F\|^2_{L^2(Q_{\alpha} \times Q_{1/\alpha})}, \]
almost everywhere in $Q_{\alpha} \times Q_{1/\alpha}$. This implies $a \leq |\ZT_q^{\alpha}g(\qx, \omega)|^2 \leq b$ almost everywhere in $Q_{\alpha} \times Q_{1/\alpha},$ where $a=A$ and $b= B $. 
On the other hand, if $a \leq \alpha^2 |\ZT_q^{\alpha} g(\qx, \omega)|^2 \leq b$ almost everywhere in $Q_{\alpha} \times Q_{1/\alpha},$ then we obtain that
\[ a \| \ZT_q^{\alpha} f \|^2_{L^2(Q_{\alpha} \times Q_{1/\alpha})} \leq \alpha^2 \| \ZT_q^{\alpha} f \overline{\ZT_q^{\alpha} g}\|_{L^2(Q_{\alpha} \times Q_{1/\alpha})}^2 \leq b \|\ZT_q^{\alpha} f\|^2_{L^2(Q_{\alpha} \times Q_{1/\alpha})} \] and, by Lemma \ref{lem:47} and Theorem \ref{thm:41}, we conclude
\[ a \|f\|^2_2 \leq \alpha^4 \|\ZT_q^{\alpha} f \overline{\ZT_q^{\alpha} g} \|^2_{L^2(Q_{\alpha} \times Q_{1/\alpha})} = \sum_{\qk, \qn \in \mathbb{Z}^2} |\langle f, M_{\frac{\qk}{\alpha}} T_{\alpha \qn} g \rangle|^2 \leq b \|f\|^2_2. \] 
Thus, the proof of the first statement is complete.

For the optimal frame bounds, and from 
\[ A \| f\|_2^2 \leq \sum_{\qk, \qn \in \mathbb{Z}^2} | \langle  f, M_{\frac{\qk}{\alpha}} T_{\qn \alpha} g\rangle   | \leq B \| f\|_2^2, \]
we obtain 
\[ A\alpha^2  \|F\|^2_{L^2(Q_{\alpha} \times Q_{1/\alpha})} \leq \alpha^4 \|F \overline{\ZT_q^{\alpha} g}\|^2_{L^2(Q_{\alpha} \times Q_{1/\alpha})} \leq \alpha^2 B \|F\|^2_{L^2(Q_{\alpha} \times Q_{1/\alpha})}\]
almost everywhere in $Q_{\alpha} \times Q_{1/\alpha}.$
From the first inequality, we have that
\[A \int_{Q_{\alpha}} \int_{Q_{1/\alpha}} |F(\qx, \omega) |^2 \dx^2 \qx \dx^2 \omega \leq   \int_{Q_{\alpha} \times Q_{1/\alpha}} |F(\qx, \omega) |^2 \alpha^2 |\overline{\ZT_q^{\alpha} g(\qx, \omega)} |^2 \dx^2 \omega \] which implies 
\[ \int_{Q_{\alpha}} \int_{Q_{1/\alpha}} |F(\qx, \omega) |^2 \left(  \alpha^2 |  \overline{\ZT_q^{\alpha} g(\qx, \omega)} |^2  -A \right) \dx^2 \omega \dx^2 \qx, \]
almost everywhere in $Q_{\alpha} \times Q_{1/\alpha}.$ Hence, 
\[ A \leq \alpha^2 | \ZT_q^{\alpha} g(\qx, \omega) |^2,\]
and we obtain as lower optimal frame bound the estimate
\[  A_{\text{opt}} =  \alpha^2 \essinf_{(\qx, \omega) \in Q_\alpha \times Q_{1/\alpha}} |\ZT_q^\alpha g(\qx, \omega)|^2. \]
The proof for the upper optimal frame bound is similar and, therefore, it will be omitted.
\end{proof}

Since an orthonormal basis is a special case of a frame, the next lemma follows immediately.
\begin{lem} For a given window $g \in L^2(\mathbb{R}^2, \mathbb{R})$ and $\alpha >0,$ the quaternionic Gabor system $\Gq(g, \alpha, \frac{1}{\alpha})$ is an orthonormal basis for $L^2(\mathbb{R}^2,\mathbb{H})$ if and only if $ |\ZT_q^{\alpha} g(\qx, \omega)|^2 = \alpha^{-2}$ almost everywhere in $Q_\alpha \times Q_{1/\alpha}.$
\end{lem}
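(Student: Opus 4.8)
The plan is to deduce this lemma directly from the preceding Lemma~\ref{lem:GqFrame} by exploiting the fact that an orthonormal basis is precisely a tight frame with frame bounds $A = B = 1$ consisting of unit-norm vectors; in the quaternionic setting with the real scalar product this characterization still holds because $\Gq(g,\alpha,\frac{1}{\alpha})$ is countable and spans a real Hilbert space. First I would observe that if $\Gq(g,\alpha,\frac{1}{\alpha})$ is an orthonormal basis, then Parseval's identity gives $\sum_{\qk,\qn}|\langle f, M_{\frac{\qk}{\alpha}}T_{\alpha\qn}g\rangle|^2 = \|f\|_2^2$ for all $f$, so in particular it is a frame with $A = B = 1$. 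Applying Lemma~\ref{lem:GqFrame} with these bounds forces $A_{\text{opt}} = B_{\text{opt}} = 1$, hence
\[
\alpha^2 \essinf |\ZT_q^\alpha g|^2 = \alpha^2 \esssup |\ZT_q^\alpha g|^2 = 1
\]
on $Q_\alpha\times Q_{1/\alpha}$, which is exactly $|\ZT_q^\alpha g(\qx,\omega)|^2 = \alpha^{-2}$ almost everywhere.

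For the converse, suppose $|\ZT_q^\alpha g(\qx,\omega)|^2 = \alpha^{-2}$ a.e. Then $a = b = 1$ works in Lemma~\ref{lem:GqFrame} (taking $a \leq \alpha^2|\ZT_q^\alpha g|^2 \leq b$), so $\Gq(g,\alpha,\frac{1}{\alpha})$ is a frame with $A = B = 1$; that is, it is a Parseval frame. It remains to upgrade "Parseval frame" to "orthonormal basis", which requires checking that each frame element has unit norm. I would compute $\|M_{\frac{\qk}{\alpha}}T_{\alpha\qn}g\|_2^2$; since $M$ and $T$ act by multiplication by unimodular exponentials and by translation, they are isometries on $L^2(\mathbb{R}^2,\mathbb{H})$ (this also follows from the orthogonality relation / Theorem~\ref{thm:41}-type unitarity), so $\|M_{\frac{\qk}{\alpha}}T_{\alpha\qn}g\|_2 = \|g\|_2$. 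By the inversion formula or by Theorem~\ref{thm:41}, $\|g\|_2^2 = \alpha^2\|\ZT_q^\alpha g\|_{L^2(Q_\alpha\times Q_{1/\alpha})}^2 = \alpha^2\cdot \alpha^{-2}\cdot |Q_\alpha\times Q_{1/\alpha}| = \alpha^2\cdot\alpha^{-2} = 1$ (using $|Q_\alpha| = \alpha^2$, $|Q_{1/\alpha}| = \alpha^{-2}$). A Parseval frame whose elements all have norm $1$ is automatically an orthonormal basis: from $\|e_i\|^2 = \sum_j |\langle e_i,e_j\rangle|^2 = \|e_i\|^4 + \sum_{j\neq i}|\langle e_i,e_j\rangle|^2$ one gets $\langle e_i,e_j\rangle = 0$ for $i\neq j$, and the Parseval property gives completeness.

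The main obstacle I anticipate is a subtle point about the quaternionic inner product: the standard "Parseval frame of unit vectors is an ONB" argument uses the Cauchy–Schwarz-type identity $\|e_i\|^2 = \sum_j|\langle e_i,e_j\rangle|^2$, which is valid here because $\langle\cdot,\cdot\rangle$ in \eqref{eq:Sc} is a genuine real inner product making $L^2(\mathbb{R}^2,\mathbb{H})$ a real Hilbert space, and the frame inequality is stated with respect to exactly this product. One should be slightly careful that the $M_{\beta\qn}$ operators, defined via the carrier $C_r$, really are $L^2$-isometries in the real-inner-product sense — this is where the noncommutativity could in principle cause trouble — but since $|M_\omega g(\qx)| = |g(\qx)|$ pointwise (multiplication by unit quaternions on either side preserves modulus), isometry is immediate. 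Given that, the argument is essentially a two-line corollary of Lemma~\ref{lem:GqFrame}, which is presumably why the authors call it immediate.
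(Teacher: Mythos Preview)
Your proposal is correct and follows essentially the same approach as the paper: both directions reduce to Lemma~\ref{lem:GqFrame}, the unitarity relation of Theorem~\ref{thm:41}, and the standard fact that a Parseval frame of unit vectors is an orthonormal basis. The only cosmetic differences are that for the forward implication the paper appeals to Lemma~\ref{lem:47} directly rather than to the optimal-bound formulas, and for the converse the paper cites Gr\"ochenig's Lemma~5.1.6(a) where you write out the ``Parseval $+$ unit norm $\Rightarrow$ ONB'' argument by hand.
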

\begin{proof} If $\Gq(g, \alpha, \frac{1}{\alpha})$ is an orthonormal basis for $L^2(\mathbb{R}^2,\mathbb{H})$, then
\[ \sum_{\qk, \qn \in \mathbb{Z}^2} |\langle f, M_{\frac{\qk}{\alpha}} T_{\alpha \qn} g \rangle|^2_2 = \|f\|^2_2 = \alpha^2 \|\ZT_q^{\alpha} f\|^2_{L^2(Q_\alpha \times Q_{1/\alpha})}.  \]
By Lemma \ref{lem:47} we have
\[  \alpha^4  \| \ZT_q^{\alpha} f \overline{\ZT_q^{\alpha} g}\|^2_{L^2(Q_\alpha \times Q_{1/\alpha})} = \alpha^2 \|\ZT_q^{\alpha} f\|^2_{L^2(Q_\alpha \times Q_{1/\alpha})} \]
which implies $| \ZT_q^{\alpha} g(\qx, \omega)|^2 = \alpha^{-2}$ for almost all $(\qx, \omega) \in  Q_{\alpha} \times Q_{1/\alpha}.$\\

We assume now that $|\ZT_q^{\alpha} g(\qx, \omega)|^2 = \alpha^{-2}$ for almost all $(\qx, \omega) \in Q_{\alpha} \times Q_{1/\alpha}.$ Then, by Lemmas \ref{lem:47} and \ref{lem:GqFrame}, $M_{\frac{\qk}{\alpha}} T_{\alpha \qn} g$ is a tight frame for $L^2(\mathbb{R}^2,\mathbb{H}).$ Indeed, $A_{\text{opt}} = B_{\text{opt}} =1.$ Finally, due to Theorem \ref{thm:41}, $\|M_{\frac{\qk}{\alpha}} T_{\alpha \qn} g\|_2 = \|g\|_2 = \alpha^2 \|\ZT_q^\alpha g\|_{L^2(Q_\alpha \times Q_{1/\alpha})} = 1$. This result combined with Lemma 5.1.6 (a) of \cite{Grchenig2001} implies that $\Gq(g, \alpha, \frac{1}{\alpha})$ is an orthonormal basis for $L^2(\mathbb{R}^2,\mathbb{H})$.
\end{proof}

The Gaussian function plays an important role in time-frequency analysis, since it minimizes the Heisenberg uncertainty principle. We have already seen that the two-dimensional Gaussian also minimizes the Heisenberg uncertainty principle in our two-sided quaternionic setting. Motivated by this, we establish the following theorem. 
\begin{thm}
The quaternionic Gabor system $\Gq(\exp(- \pi \qx^2), \alpha, \frac{1}{\alpha})$ is not a frame for $L^2(\mathbb{R}^2,\mathbb{H})$.
\end{thm}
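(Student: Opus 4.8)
The plan is to invoke Lemma~\ref{lem:GqFrame}. Writing $g_0(\qx) = \exp(-\pi\qx^2) = \exp(-\pi x_1^2)\exp(-\pi x_2^2)$, this is a non-zero element of $L^2(\mathbb{R}^2,\mathbb{R})$, so by that lemma $\Gq(g_0,\alpha,\tfrac{1}{\alpha})$ fails to be a frame as soon as $\essinf_{(\qx,\omega)\in Q_\alpha\times Q_{1/\alpha}}|\ZT^\alpha_q g_0(\qx,\omega)|^2 = 0$, i.e.\ as soon as the quaternionic Zak transform of the Gaussian has a zero in the fundamental cube (continuity will upgrade one zero to a vanishing essential infimum). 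Thus the whole proof reduces to exhibiting such a zero.

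First I would compute $\ZT^\alpha_q g_0$ explicitly. Since each shifted sample $g_0(\qx-\alpha\qm) = \exp(-\pi(x_1-\alpha m_1)^2)\exp(-\pi(x_2-\alpha m_2)^2)$ is a \emph{positive real scalar}, it commutes with every quaternion, and the defining double sum separates:
\[
\ZT^\alpha_q g_0(\qx,\omega) = P_j(x_2,\omega_2)\,P_i(x_1,\omega_1),\qquad
P_i(s,\sigma) = \sum_{m\in\mathbb{Z}} e^{-\pi(s-\alpha m)^2}e^{2\pi i\alpha m\sigma},\qquad
P_j(s,\sigma) = \sum_{m\in\mathbb{Z}} e^{-\pi(s-\alpha m)^2}e^{2\pi j\alpha m\sigma}.
\]
Here $P_i$ takes values in the commutative subfield $\mathbb{R}\oplus\mathbb{R}i\cong\mathbb{C}$ and $P_j$ in $\mathbb{R}\oplus\mathbb{R}j\cong\mathbb{C}$; one has to keep the $j$-factor on the left, but because the quaternionic modulus is multiplicative the order is immaterial for $|\ZT^\alpha_q g_0(\qx,\omega)|^2 = |P_i(x_1,\omega_1)|^2\,|P_j(x_2,\omega_2)|^2$. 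After the rescaling $s=\alpha u$, $\sigma\mapsto\alpha\sigma$, each factor is precisely the classical one-dimensional Zak transform of a Gaussian on the unit square.

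Next I would locate a zero of, say, $P_i$. Evaluating at the interior point $s=\alpha/2$, $\sigma=1/(2\alpha)$ of $[0,\alpha]\times[0,1/\alpha]$ gives $P_i(\tfrac{\alpha}{2},\tfrac{1}{2\alpha}) = \sum_{m\in\mathbb{Z}}(-1)^m e^{-\pi\alpha^2(m-1/2)^2}$ (the exponential $e^{2\pi i\alpha m/(2\alpha)}=e^{\pi i m}=(-1)^m$ is real), and the reindexing $m\mapsto 1-m$ sends this absolutely convergent sum to its own negative; hence $P_i(\tfrac{\alpha}{2},\tfrac{1}{2\alpha})=0$. (Alternatively, one can avoid the computation: the quasiperiodicity relations, read off for this single factor, force the winding number of $P_i$ around $\partial(Q_\alpha\times Q_{1/\alpha})$ to equal $1$, whence $P_i$ vanishes somewhere in the interior by a degree argument.) Consequently $\ZT^\alpha_q g_0 = P_j\cdot P_i$ vanishes at every point $(\tfrac{\alpha}{2},x_2,\tfrac{1}{2\alpha},\omega_2)$ of $Q_\alpha\times Q_{1/\alpha}$. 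Finally, the Gaussian series converges uniformly on compact sets, so $\ZT^\alpha_q g_0$ is continuous; thus for every $\varepsilon>0$ the set $\{|\ZT^\alpha_q g_0|^2<\varepsilon\}$ contains a full neighbourhood of such a point in the cube and hence has positive Lebesgue measure. Therefore $\essinf_{Q_\alpha\times Q_{1/\alpha}}|\ZT^\alpha_q g_0|^2=0$, and Lemma~\ref{lem:GqFrame} gives the claim. The only step requiring genuine care is the factorization — justifying that the non-commutative double sum splits as $P_jP_i$ with only positive scalars coupling the two index variables — together with the reindexing that produces the zero; continuity of the Zak series, the passage from one zero to a vanishing essential infimum, and the final appeal to Lemma~\ref{lem:GqFrame} are routine.
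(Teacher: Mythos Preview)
Your proof is correct and follows essentially the same approach as the paper: both reduce to Lemma~\ref{lem:GqFrame} by showing the quaternionic Zak transform of the Gaussian is continuous and vanishes at $(\tfrac{\alpha}{2},\tfrac{\alpha}{2},\tfrac{1}{2\alpha},\tfrac{1}{2\alpha})$ via the same reindexing $m\mapsto 1-m$ (equivalently, pairing $m$ with $-m-1$). Your explicit factorization $\ZT^\alpha_q g_0 = P_j\,P_i$ is a clean presentational touch that the paper leaves implicit in its direct computation, but the underlying argument is identical.
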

\begin{proof}
In the first step we demonstrate that for any function $g \in W_0^\alpha$, the quaternionic Zak transform $\ZT_q^\alpha g$ is continuous. And in the second step we show that the Zak transform of $\exp(-\pi \qx^2) \in W_0^\alpha(\mathbb{R}^2)$ has at least one zero. Therefore, Lemma \ref{lem:GqFrame} can not hold, as $A_{\text{opt}} = 0$. \\
Step (1):\\
Given $\varepsilon > 0$, there exists a $N> 0$ such that\\
\[ \sum_{|\qk| > N} \| g T_{\alpha \qk} \mathcal{X}_{Q_{\alpha}} \|_\infty < \frac{\varepsilon}{4}. \]
Then, the main term $\sum_{|\qk| \leq N} \exp(2 \pi j \alpha k_2 \omega_2) g (\qx - \alpha \qk) \exp(2 \pi i \alpha k_1 \omega_1)$ is uniformly continuous on compact sets of $\mathbb{R}^4,$ and there exists a $\delta > 0$ such that
\[ \left| \sum_{|\qk| \leq N} \exp(2 \pi j \alpha k_2 \omega_2) g (\qx - \alpha\qk) \exp(2 \pi i \alpha k_1 \omega_1) - \sum_{|\qk| \leq N} \exp(2 \pi j \alpha k_2 \xi_2) g (\qy - \alpha \qk) \exp(2 \pi i \alpha k_1 \xi_1)\right| < \frac{\varepsilon}{2} \]
whenever $|\qx - \qy| + |\omega - \xi| < \delta.$ As a consequence, $|\ZT_q^{\alpha} g (\qx, \omega) - \ZT^{\alpha}_q g (\qy, \xi)| < \varepsilon$ and $\ZT^{\alpha}_q g$ is continuous.\\
Step (2):\\
We have for $\varphi(\qx) = \exp(- \pi x_1^2 - \pi x_2^2)$
\begin{align*}
\ZT^{\alpha}_q \varphi(\qx, \omega) &= \sum_{ \qm \in \mathbb{Z}^2} \exp(2 \pi j \alpha m_2 \omega_2) \exp(- \pi (\qx - \alpha \qm)^2) \exp(2 \pi i \alpha m_1 \omega_1)\\[1ex]
&=\exp(- \pi x_2^2) \sum_{\qm \in \mathbb{Z}^2} \exp(2 \pi j \alpha m_2 \omega_2 + 2 \pi \alpha x_2 m_2 - \pi \alpha^2 m_2^2)\\[1ex]
&\qquad \exp(- \pi \alpha^2 m_1^2 + 2 \pi \alpha m_1 x_1 + 2 \pi i \alpha m_1 \omega_1) \exp(- \pi x_1^2)
\end{align*}
\begin{align*}
&= \exp(- \pi x_2^2) \sum_{\qm \in \mathbb{Z}^2} \exp(2 \pi j \alpha m_2 (x_2 + j \omega_2) - \pi \alpha^2 m_2^2)\\[1ex]
&\qquad \exp(2 \pi i \alpha m_1(x_1 - i \omega_1) - \pi \alpha^2 m_1^2) \exp(- \pi x_1^2).
\end{align*}
We set now $\omega_1 = \omega_2 = \frac{1}{2 \alpha}$ and $x_1 = x_2 = \frac{\alpha}{2}$ and get for the inner sum
\begin{align*}
&\sum_{\qm \in \mathbb{Z}^2} \exp(m_2 \pi j - \pi \alpha^2 m_2 - \pi \alpha^2 m^2_2) \exp(m_1 \pi i - \pi \alpha^2 m_1 - \pi \alpha^2 m_1^2)\\[1ex]
&\sum_{\qm \in \mathbb{Z}^2} (-1)^{m_2} \exp(\pi \alpha^2 (-m_2 - m_2^2))\, (-1)^{m_1} \exp(\pi \alpha^2 (-m_1 - m_1^2)).
\end{align*}
To see that this sum is zero, we match $m_2 \geq 0$ with $- m_2 - 1$ and observe that they have opposite parity. Moreover, $(-m_2 -1)^2 + (- m_2 -1) = m_2^2 + m_2$. The same argument goes for the second part of the sum. Therefore, $\ZT_q^\alpha \varphi(\qx, \omega)$ is zero for $\omega_1 = \omega_2 = \frac{1}{2 \alpha}$ and $x_1 = x_2 = \frac{\alpha}{2}$ and our proof is finished. 
\end{proof}
The last result could be seen as Gabor's conjecture in the quaternionic case.

\section{Conclusions}
Using the two-sided quaternionic Fourier transform, we established the theory of the WQFT and quaternionic Gabor systems. Due to the non-commutativity we showed that the usual interplay between the translation and modulation operators holds in a more elaborated way. Thus, the underlying structure could be interesting for future research. By using the quaternionic Zak transform we could obtain conditions which ensure the quaternionic Gabor system to constitute a quaternionic Gabor frame. The two-dimensional Gaussian is of special interest as a window function, as it minimizes the uncertainty principle. Motivated by this fact, we were able to show that the Gabor's conjecture for the quaternionic case does not hold.    

\section{Acknowledgements}
The author gratefully acknowledges the many helpful suggestions of S. Bernstein, P. Cerejeiras and U. Kähler during the preparation of the paper. This work was supported by the ERASMUS program and Portuguese funds through the CIDMA - Center for Research and Development in Mathematics and Applications, and the Portuguese Foundation for Science and Technology (``FCT - Funda\c{c}\~ao para a Ci\^encia e a Tecnologia''), within project UID/MAT/ 0416/2013.

\bibliographystyle{plain}
\bibliography{References}
\end{document}